\documentclass[twoside,a4paper,reqno,11pt]{amsart} 
\usepackage{amsfonts, amsbsy, amsmath, amssymb, latexsym}
\usepackage{mathrsfs}

\usepackage[top=30mm,right=30mm,bottom=30mm,left=30mm]{geometry}
\usepackage{stmaryrd}
\usepackage{bm}

\headheight=14pt
\parskip 1mm

\renewcommand{\a}{\alpha}

\newcommand{\e}{\epsilon}

 \renewcommand{\to}{\rightarrow}
 \newcommand{\s}{\sigma}

\newcommand{\leqs}{\leqslant}
\newcommand{\geqs}{\geqslant}

 \newcommand{\vs}{\vspace{3mm}}

\newtheorem{theorem}{Theorem}

\newtheorem{thm}{Theorem}[section]
\newtheorem{prop}[thm]{Proposition}
\newtheorem{lem}[thm]{Lemma}
\newtheorem{cor}[thm]{Corollary}

\theoremstyle{definition}

\newtheorem{remk}{Remark}

\begin{document}

 \author{Timothy C. Burness}
 \address{School of Mathematics, University of Bristol, Bristol BS8 1TW, UK}
 \email{t.burness@bristol.ac.uk}
 
\author{\'{A}kos Seress}
\address{Department of Mathematics, The Ohio State University, 231 W 18th Avenue, Columbus OH 43210, USA}
\email{akos@math.ohio-state.edu}

\title
[On Pyber's base size conjecture]{On Pyber's base size conjecture}

\thanks{To appear in \emph{Transactions of the American Mathematical Society}. The first author was supported by EPSRC grant EP/I019545/1, and he thanks the Department of Mathematics at The Ohio State University for its generous hospitality. Both authors thank Professor C.E. Praeger for helpful comments.}

\subjclass[2010]{Primary 20B15}

\keywords{finite permutation groups; primitive groups; base size; Pyber's conjecture}

\begin{abstract}
Let $G$ be a permutation group on a finite set $\Omega$.  A subset of $\Omega$ is a base for $G$ if its pointwise stabilizer in $G$ is trivial. The base size of $G$, denoted $b(G)$, is the smallest size of a base. A well known conjecture of Pyber from the early 1990s asserts that there exists an absolute constant $c$ such that $b(G) \leqs c\log |G| / \log n$ for any primitive permutation group $G$ of degree $n$. 
Some special cases have been verified in recent years, including the almost simple and diagonal cases.
In this paper, we prove Pyber's conjecture for all non-affine primitive groups.
\end{abstract}

\date{\today}
\maketitle
 
\section{Introduction}\label{s:intro}

Let $G$ be a permutation group on a set $\Omega$ of size $n$.  A subset $B$ of $\Omega$ is a base for $G$ if the pointwise stabilizer of $B$ in $G$ is trivial. The \emph{base size} of $G$, denoted $b(G)$, is the smallest size of a base for $G$. Determining base sizes is a fundamental problem in permutation group theory, with a long history stretching back to the nineteenth century. More recently, bases have played an important role in the computational study of finite permutation groups (see \cite[Chapter 4]{Seress_book} for further details).

Clearly, the elements of $G$ are uniquely determined by their effect on a base. In particular, if $B$ is a base for $G$ then $|G| \leqs n^{|B|}$ and thus
$$b(G) \geqs \frac{\log |G|}{\log n}.$$ 
A well known conjecture of L. Pyber \cite[p.207]{Pyber} asserts that there is an absolute constant $c$ such that 
$$b(G) \leqs c \frac{\log |G|}{\log n}$$
for any primitive group $G$ of degree $n$. It is easy to see that the primitivity condition is necessary. For instance, $G = Z_2 \wr Z_k$ is a transitive, imprimitive group of degree $2k$, and $b(G) = k = \log_2 |G| - \log_2 k$. 

In recent years, several special cases of Pyber's conjecture have been verified by various authors (see Section \ref{s:prel} for more details):
\begin{itemize}\addtolength{\itemsep}{0.5\baselineskip}
\item[(i)] Almost simple groups: Liebeck and Shalev \cite{LS0},
  Burness et al. \cite{Bur,BGS,BLS,BOW} (non-standard actions); Benbenishty \cite{Benbenishty} (standard actions).
\item[(ii)] Diagonal groups: Fawcett \cite{Fawcett}.
\item[(iii)]  Affine groups (some special cases): Liebeck and Shalev \cite{LS} (primitive case); Seress \cite{Seress} (solvable case); Gluck and Magaard \cite{GM} (coprime case).
\end{itemize}

A related (but somewhat weaker) conjecture of Babai on base sizes for primitive groups was proved in \cite{GSS}. For a positive integer $d$, let $\Gamma_d$ be the family of finite groups $G$ with the property that $G$ has no alternating composition factors of degree greater than $d$, and no classical composition factors of rank greater than $d$ (there are no conditions on the cyclic, exceptional and sporadic composition factors of $G$). Then \cite[Theorem 1.2]{GSS} implies that Pyber's conjecture holds for all $G \in \Gamma_d$ (with the constant $c$ depending on $d$).

Our main result establishes Pyber's conjecture for all non-affine primitive groups. In particular, this is the first paper to consider the conjecture for product-type and twisted wreath product primitive groups.

\begin{theorem}\label{tt:main}
There exists an absolute constant $c$ such that 
$$b(G) \leqs c \frac{\log |G|}{\log n}$$
for any non-affine primitive group $G$ of degree $n$.
\end{theorem}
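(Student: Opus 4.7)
My plan is to use the O'Nan--Scott theorem to reduce to a small number of structural cases. Excluding affine groups, the surviving O'Nan--Scott types are almost simple (AS), simple diagonal (SD), compound diagonal (CD), product action (PA), and twisted wreath (TW). The AS case is already known thanks to Liebeck--Shalev, Burness et al., and Benbenishty, and the SD case is handled by Fawcett. So the essential new content of Theorem~\ref{tt:main} lies in the PA, CD, and TW cases, and the strategy is to bootstrap from the known AS and SD bounds to the product-type cases by exploiting how bases behave with respect to the product structure of the socle.

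\textbf{Product action and compound diagonal.} Suppose $G \leqs L \wr P$ acts on $\Omega = \Delta^k$, where $L \leqs \mathrm{Sym}(\Delta)$ is primitive with $|\Delta| = m$ and socle $T^\ell$ (so $\ell = 1$ in the PA case and $\ell > 1$ in the CD case), and $P \leqs S_k$ is transitive; here $n = m^k$. The key step is to construct a base for $G$ by choosing $b \approx b(L) + \lceil \log_m k \rceil$ tuples $\omega_1,\dots,\omega_b \in \Delta^k$ whose projections to each coordinate contain a base for $L$ on $\Delta$, and whose associated $k$ columns (viewed as vectors in $\Delta^b$) are pairwise distinct. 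If $(h_1,\dots,h_k)\sigma \in L \wr P$ fixes every $\omega_i$, the column-distinctness forces $\sigma = 1$, and then each $h_j$ fixes the base in coordinate $j$, so $h_j = 1$. Applying the known AS/SD bounds to $b(L)$ then gives
\[
b(G) \;\leqs\; b(L) + O(\log k/\log m) \;=\; O(\log|G|/\log n),
\]
since $\log|G| \geqs k\log|L|$ and $\log n = k\log m$.

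\textbf{Twisted wreath type.} Here $G$ has socle $N = T^k$ acting regularly on $\Omega$ with $T$ non-abelian simple and $k$ sufficiently large, so $|\Omega| = |T|^k$. A point stabilizer $H = G_\omega$ is a ``twisted diagonal'', whose order is essentially controlled by $|T|\cdot|\mathrm{Out}(T)|\cdot k!$. Since $N$ is regular, a base for $G$ amounts to a single point $\omega$ together with a base for $H$ on $\Omega\setminus\{\omega\}$, and the plan is to bound the latter by combining the diagonal-type and symmetric-type structure of $H$, ultimately reducing to a counting problem closely analogous to the PA case.

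\textbf{Main obstacle.} The principal difficulty is the PA/CD construction when $P$ is a proper transitive subgroup of $S_k$ and $L$ has small base size but many $L$-orbits on small subsets of $\Delta$: one must ensure simultaneously that column-distinguishing tuples can be selected so that their $P$-orbits on columns remain separated, while each coordinate independently supports a full base for $L$. Getting the quantitative dependence $O(\log k / \log m)$ rather than, say, $O(\log k)$ is essential for the Pyber bound. The TW case, though structurally different, should reduce to the same kind of combinatorial counting once the point stabilizer structure is made explicit.
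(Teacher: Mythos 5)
Your high-level reduction is the same as the paper's: by the O'Nan--Scott theorem the almost simple and simple diagonal cases are already known, and you propose to handle the product-type cases by combining a base for the component group $H$ on $\Gamma$ with additional points that kill the top group $P \leqs S_k$, then deduce the twisted wreath case as a corollary. That outline is correct. However, the quantitative heart of your argument in the product-action case is wrong. You propose to pick roughly $\lceil \log_m k\rceil$ extra tuples so that the $k$ columns of the resulting $b \times k$ matrix over $\Delta$ are pairwise distinct, which indeed costs at least $\log_m k$ rows. This is far too many points when $P$ is small. Concretely, take $H$ fixed (say $S_5$ on $\Gamma$ of size $5$) and $P = Z_k$ acting regularly on $[k]$, with $k \to \infty$. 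Then $|G| \asymp |H|^k$, $n = 5^k$, so $\log|G|/\log n$ is bounded, whereas $\log_m k$ grows without bound. Thus $b(G) \leqs b(H) + O(\log_m k)$ is not of Pyber type. The bound you would actually need is $O\bigl(1 + \tfrac{\log|P|}{k\log m}\bigr)$ extra points, i.e.\ you must \emph{exploit the structure of $P$}, not merely its degree $k$. You flag the distinction between $O(\log k/\log m)$ and $O(\log k)$ as the obstacle, but that is not the right distinction; even $O(\log k/\log m)$ fails.

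The missing ingredient is precisely the paper's central combinatorial result (Theorem~\ref{t:main}): for any transitive $P \leqs S_k$, one can find $O(1 + \log|P|/k)$ two-part partitions of $[k]$ whose stabilizers in $P$ intersect trivially. This is easy when $P$ is primitive (distinguishing number $\leqs 4$ unless $P \supseteq A_k$), but the imprimitive case requires a delicate induction along a structure tree, with special handling of ``large'' levels where a giant $A_m$ or $S_m$ is induced, using linking factors and base-$\chi$ expansions to keep the partition count proportional to $\log|P|/k$ rather than to $\log k$. Once those partitions are available, one encodes $\lfloor\log|\Gamma|\rfloor$ of them into a single point of $\Omega = \Gamma^k$ by taking a common refinement and labelling its parts by distinct elements of $\Gamma$, which yields the sharp $\lceil a/r\rceil$ count. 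Your column-distinctness construction has no way to recover this saving. (There is also a minor logical slip in the order of your deductions: ``column-distinctness forces $\sigma = 1$'' does not hold directly, since the $h_j$ may map one column to another; the paper kills the $h_j$ first, via the diagonal base points, and only then concludes $\sigma = 1$.) Finally, your twisted wreath sketch is vague; the paper simply embeds $G$ into a product-type overgroup $T^2 \wr P$ and invokes the product-action theorem, which is cleaner than re-running a counting argument in the point stabilizer.
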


\begin{remk}\label{r:1}
By combining Theorem \ref{tt:main} with the main results in \cite{LS, Seress, GM} on affine primitive groups, it follows that in order to complete the proof of Pyber's conjecture one may assume that  $G = V \rtimes G_0$ is an affine group, where $|V|=p^d$ ($p$ a prime), $G_0 \leqs {\rm GL}(V)$ is irreducible and the following three conditions hold:
\begin{itemize}\addtolength{\itemsep}{0.5\baselineskip}
\item[(i)] $G_0$ is nonsolvable;  
\item[(ii)]  $p$ divides $|G_0|$; and 
\item[(iii)] $G_0$ acts imprimitively on $V$ (that is, $G_0$ stabilizes a nontrivial direct sum decomposition of $V$).
\end{itemize}
\end{remk}

The main focus of this paper concerns the primitive groups of product-type. In Section \ref{ss:pybpt} we will deduce Pyber's conjecture for product-type groups by applying the following theorem, which may be of independent interest.

\vs

\noindent \textbf{Theorem \ref{t:main}.} \emph{Let $P$ be a transitive permutation group of degree $k$. Then there exist
$$O\left(1+\frac{\log_2 |P|}{k}\right)$$
$2$-part partitions of $\{1, \ldots, k\}$ with the property that the intersection of the stabilizers of these partitions in $P$ is trivial.}

\vs

This is relatively straightforward to establish when $P$ is primitive (see Section \ref{ss:prim}), but more effort is required in the imprimitive case. The latter situation is handled in Section \ref{ss:imprim}, and the highly combinatorial proof uses (and extends) some of the ideas in the proof of \cite[Theorem 4.1]{GSS}, which plays a key role in the proof of Babai's base size conjecture in \cite{GSS}. Finally, in Section \ref{s:tw} we establish Pyber's conjecture for twisted wreath products as an easy corollary of our work on product-type groups. 

\vs

\noindent \textbf{Notation.}
Our notation is fairly standard. For a positive integer $n$ we set
$[n]=\{1, \ldots, n\}$. All logarithms in this paper are with respect
to the base $2$. If $X$ is a set then we write $(X_1, \ldots, X_n)$
to denote an ordered partition of $X$ into subsets $X_i$. For $G \leqs {\rm Sym}(\Omega)$ and $\Gamma \subseteq \Omega$, we denote by $G_{\Gamma}$ the setwise stabilizer of $\Gamma$ in $G$. Also, if $G$ acts on a set $\Delta$ then $G^{\Delta}$ denotes this action. Finally, we write ${\rm Soc}(G)$ for the socle of a finite group $G$, that is, the subgroup of $G$ generated by its minimal normal subgroups. Some additional notation will be introduced as and when necessary.

\section{Preliminaries}\label{s:prel} 

\subsection{The O'Nan-Scott theorem}\label{ss:onan}

Let $G$ be a primitive permutation group of degree $n$. The various possibilities for $G$ are described by the O'Nan-Scott theorem, which classifies the finite primitive permutation groups according to their socle  and the action of a point stabilizer. Here we follow the version presented by Liebeck, Praeger and Saxl \cite{LPS}, with Table \ref{t:onan} providing a rough description of the families of primitive groups that arise. In the table, $V$ is a vector space over a prime field $\mathbb{F}_{p}$, $T$ denotes a nonabelian simple group, and $T^k$ is the direct product of $k$ copies of $T$. We refer the reader to \cite{LPS} for further details (we will say more about product-type groups in Section \ref{s:pa}).

\renewcommand{\arraystretch}{1.1}
\begin{table}
$$\begin{array}{ll} \hline
\mbox{Type} & \mbox{Description} \\ \hline
\mbox{I} & \mbox{Affine: $G = V \rtimes G_0 \leqs {\rm AGL}(V)$, $G_0 \leqs{\rm GL}(V)$ irreducible} \\
\mbox{II} & \mbox{Almost simple: $T \leqs G \leqs {\rm Aut}(T)$} \\
\mbox{III(a)(i)} & \mbox{Diagonal-type: $T^k \leqs G \leqs T^k.({\rm Out}(T) \times P)$, $P \leqs S_k$ primitive} \\
\mbox{III(a)(ii)} & \mbox{Diagonal-type: $T^2 \leqs G \leqs T^2.{\rm Out}(T)$} \\
\mbox{III(b)(i)} & \mbox{Product-type: $G \leqs H \wr P$, $H$ primitive of type II, $P \leqs S_k$ transitive} \\
\mbox{III(b)(ii)} & \mbox{Product-type: $G \leqs H \wr P$, $H$ primitive of type III(a), $P \leqs S_k$ transitive} \\
\mbox{III(c)} & \mbox{Twisted wreath product} \\ \hline
\end{array}$$
\caption{The primitive permutation groups}
\label{t:onan}
\end{table}
\renewcommand{\arraystretch}{1}

As noted in the Introduction, Pyber's conjecture has been verified in the almost simple and diagonal-type cases referred to in Table \ref{t:onan}. In addition, several special cases involving affine groups have also been handled. Below we briefly summarize the progress to date on Pyber's conjecture, starting with the almost simple groups.

\subsection{Almost simple groups}\label{ss:as}

Let $G \leqs {\rm Sym}(\Omega)$ be an almost simple primitive group with socle $T$. In the study of such groups, it is natural to make a distinction between the so-called \emph{standard} and \emph{non-standard} groups. Roughly speaking, we say that $G$ is standard if $T=A_n$ is an alternating group and $\Omega$ is an orbit of subsets or partitions of $[n]$, or $T=Cl(V)$ is a classical group and $\Omega$ is an orbit of subspaces of the natural $T$-module $V$ (see \cite[Definition 1.1]{Bur} for the precise definition). 

\begin{thm}\label{t:as}
Pyber's conjecture holds if $G$ is almost simple. 
\end{thm}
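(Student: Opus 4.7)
The plan is to invoke the standard/non-standard dichotomy for almost simple primitive groups. Let $G \leqs {\rm Sym}(\O)$ be almost simple with socle $T$. Either $G$ is \emph{standard} in the sense of \cite[Definition 1.1]{Bur}, or it is non-standard, and the two regimes are handled by essentially disjoint bodies of work that together cover all almost simple primitive groups in the O'Nan--Scott classification.

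For non-standard $G$, I would cite the uniform bound $b(G) \leqs 7$ established by Burness, Liebeck, Shalev and collaborators in the series of papers \cite{Bur,BGS,BLS,BOW,LS0}. Combining this with the trivial inequality $\log |G| / \log n \geqs 1$ (which holds for any faithful transitive action, since $|G| \geqs n$) immediately yields Pyber's conjecture in this regime with $c = 7$; no further analysis is required, because a uniformly bounded base size trivially satisfies any constant times a quantity that is at least $1$.

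For standard $G$ I would appeal to Benbenishty \cite{Benbenishty}, who computes $b(G)$ in each of the two standard families: (i) $T = A_m$ acting on an orbit of $k$-element subsets or $k$-part partitions of $[m]$; and (ii) $T$ a finite classical group on an orbit of proper non-zero subspaces (or pairs of incident subspaces) of its natural module. For each family one verifies $b(G) \leqs c \log |G|/\log n$ by substituting Benbenishty's explicit base-size formulas into the desired inequality, and then comparing against the known orders $|G|$ and the degrees $n$, which are given by binomial coefficients or Gaussian binomial coefficients respectively.

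The main obstacle lies squarely in the standard case, since there $b(G)$ is unbounded as $n \to \infty$. The hardest regime is the middle range --- for instance $k \approx m/2$ in the subset and partition actions, or middle-dimensional subspaces in the classical families --- where both $b(G)$ and $\log n$ grow together and small constants matter. Here one must track the ratio $\log |G|/\log n$ rather carefully, since it decreases as the action moves toward the middle, and one must verify that Benbenishty's base-size estimates are sharp enough to match this decrease. Additional low-rank or small-degree cases (small classical groups, small alternating groups, and a handful of sporadic exceptions) will need to be addressed by direct inspection, but these contribute at most a finite correction that can be absorbed into the absolute constant $c$.
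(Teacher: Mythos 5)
Your proposal follows exactly the same route as the paper: split into the standard/non-standard dichotomy, use the uniform bound $b(G) \leqs 7$ of Burness et al.\ (refining Liebeck--Shalev) together with the trivial bound $\log|G|/\log n \geqs 1$ for non-standard actions, and appeal to Benbenishty's thesis for standard actions. The paper simply records that Benbenishty already proves Pyber's inequality with an explicit constant $c<15$ in the standard case, so the verification you describe there is not something the authors re-derive.
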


\begin{proof}
For standard groups, a strong form of the conjecture (with an explicit constant $c < 15$) is established in the unpublished PhD thesis of Benbenishty \cite{Benbenishty} (see \cite{BCN, Halasi, James} for additional results on bases for standard actions of alternating and symmetric groups).

In general, the base size of a standard group can be arbitrarily
large, which is in stark contrast to the situation for non-standard
groups. Indeed, a theorem of Liebeck and Shalev \cite[Theorem
1.3]{LS0} states that there is an absolute constant $c$ such that
$b(G) \leqs c$ for any non-standard group $G$ (and moreover, the
probability that a random $c$-tuple of points in $\Omega$ forms a base for
$G$ tends to $1$ as $|G|$ tends to infinity). Later, in a series of
papers \cite{Bur,BGS,BLS,BOW}, Burness et al. showed that $b(G) \leqs 7$, with equality if and only if $G$ is the largest Mathieu group ${\rm M}_{24}$ in its $5$-transitive action of degree $24$.
\end{proof}

\subsection{Diagonal groups}\label{ss:d}

Let $G$ be a primitive diagonal-type group of degree $n$ with socle $T^k$, where $T$ is a nonabelian simple group and $k \geqs 2$. Then $n=|T|^{k-1}$ and $G$ is a (not necessarily split) extension of $T^k$ by a subgroup of ${\rm Out}(T) \times S_k$. Let $P \leqs S_k$ be the permutation group induced from the conjugation action of $G$ on the $k$ factors of $T^k$. The primitivity of $G$ implies that either $P$ is primitive (as in Case III(a)(i) in Table \ref{t:onan}), or $k=2$ and $P=1$ (as in Case III(a)(ii)). As explained in \cite{Fawcett}, the induced group $P$ plays a large part in determining $b(G)$.

\begin{thm}\label{t:d}
Pyber's conjecture holds if $G$ is a diagonal-type group. 
\end{thm}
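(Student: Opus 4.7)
My plan is to produce a base of size $O(1 + \log k/\log|T|)$, which matches Pyber's prediction: from $n = |T|^{k-1}$ and $|G| \leqs |T|^k \cdot |{\rm Out}(T)| \cdot |P|$ together with $|{\rm Out}(T)| = |T|^{o(1)}$ and $|P| \leqs k!$, we get $\log|G|/\log n = \Theta(1 + \log k/\log|T|)$. I would identify $\Omega$ with $T^{k-1}$ via $[x_1,\ldots,x_k] \mapsto (x_1^{-1}x_2,\ldots,x_1^{-1}x_k)$, so the socle $T^k$ acts by $(t_1,\ldots,t_k)\cdot(y_j) = (t_1^{-1}t_j y_j)$, a chosen lift of ${\rm Out}(T)$ acts diagonally on coordinates, and a chosen lift of $P$ permutes the $k$ factors (after renormalization of the first).

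The construction is layered. \emph{Socle:} using that every nonabelian simple group is $2$-generated, take $\omega_1 = (1,\ldots,1)$ and $\omega_2 = (g_1, g_2, 1, \ldots, 1) \in T^{k-1}$ with $\langle g_1, g_2 \rangle = T$; a short calculation shows $(T^k)_{\omega_1} \cap (T^k)_{\omega_2} = \{(t,\ldots,t): t \in C_T(\langle g_1, g_2\rangle)\} = Z(T) = 1$. \emph{Top group $P$:} invoke Theorem \ref{t:main} to obtain $m = O(1 + \log|P|/k) = O(\log k)$ $2$-part partitions of $[k]$ whose stabilizers in $P$ intersect trivially; encode these as base points in $T^{k-1}$ by assigning each coordinate an element of $T$ whose label under a fixed injection $\{0,1\}^{\lfloor\log|T|\rfloor} \hookrightarrow T$ records the sides of $\lfloor\log|T|\rfloor$ partitions simultaneously, so that $\lceil m/\log|T|\rceil = O(1 + \log k/\log|T|)$ such points force any stabilizing $\pi \in P$ to preserve every partition and hence to be trivial. \emph{Outer automorphisms:} the small quotient ${\rm Out}(T)$, of order $|T|^{o(1)}$, is killed by $O(1)$ further generic points via a union bound on fixed-point tuples.

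The main obstacle will be the coupling between these layers: a base point introduced to rule out some $\pi \in P$ might still admit a mixed stabilizer element lying above $(\sigma,\pi) \in {\rm Out}(T) \times P$ but with nontrivial socle component. I would dispose of this by a genericity argument: for each of the finitely many nontrivial quotient elements, the set of marker tuples $(t_j) \in T^{k-1}$ extending the stabilizer is a proper subvariety whose relative density is bounded by a negative power of $|T|$, and a union bound over the at most $|{\rm Out}(T)|\cdot|P|$ bad elements succeeds since this count is dominated by $|T|^{k-1}$ for all sufficiently large $k$; a finite number of residual small cases are absorbed into the constant. This yields an absolute constant $c$ uniform in $k$ and $T$, completing the proof.
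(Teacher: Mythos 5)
The paper proves this theorem by citing Fawcett's work \cite{Fawcett}, whereas you are attempting a self-contained argument, so the two routes are entirely different. Your layered plan (two points to kill the socle via $2$-generation and simplicity; $O(1+\log|P|/k)$ structured points via Theorem~\ref{t:main} encoded $\lfloor\log|T|\rfloor$ partitions at a time; then patch up ${\rm Out}(T)$ and the coupling) is plausible in outline, and the socle computation is fine for $k \geqs 3$ after correcting the action to $y_j \mapsto t_1^{-1} y_j t_j$. However, the step you yourself flag as the main obstacle is where the argument actually breaks.

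The genericity/union-bound claim is false in the regime where the theorem is hardest. You assert that the count of bad elements, at most $|{\rm Out}(T)|\cdot|P|$, ``is dominated by $|T|^{k-1}$ for all sufficiently large $k$'' and that each bad element is killed with probability $|T|^{-c}$ by a bounded number of random points. Neither half survives scrutiny. When $P = A_k$ or $S_k$ one has $|P| \sim (k/e)^k$, which exceeds $|T|^{k-1}$ as soon as $k$ is a bounded multiple of $|T|$ (for $T = A_5$, already around $k \approx 160$). Worse, for an element whose $P$-part is a transposition, the fixed-point proportion on $T^{k-1}$ is roughly $|T|^{-1}$, so with $O(1)$ random points the failure probability per such element is only $|T|^{-O(1)}$, and there are $\Theta(k^2)$ transpositions (and far more short-support permutations), so the union bound diverges whenever $k \gg |T|^{O(1)}$. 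This is exactly the regime $k$ large relative to $|T|$ that makes the diagonal case nontrivial, and a union bound over residual elements of ${\rm Out}(T) \times P$ cannot close it. You would need the structured encoding itself --- not genericity --- to force the $P$-component to preserve the partitions even in the presence of an outer automorphism acting on labels and the renormalization twist $y_j \mapsto t_1^{-1} y_j t_j$ (and $y_j \mapsto y_{1^{p^{-1}}}^{-1} y_{j^{p^{-1}}}$ when $p$ moves $1$); for instance by encoding partition classes into distinct ${\rm Aut}(T)$-orbits rather than distinct elements, and treating the ``coordinate $1$'' renormalization explicitly. Fawcett's proof does this bookkeeping carefully and that is why it is not a one-paragraph probabilistic argument. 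As written, your proposal has a genuine gap at the coupling step.
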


\begin{proof}
In \cite{Fawcett}, Fawcett establishes a strong form of Pyber's conjecture for diagonal groups. Indeed,  \cite[Theorem 1.3]{Fawcett} states that
$$b(G) \leqs \left\lceil \frac{\log |G|}{\log n}\right\rceil +2$$
(see also \cite[Remark 4.3]{GSS}). More precisely, the following results are proved (we refer the reader to \cite{Fawcett} for more detailed results in (ii) and (iii) below): 
\begin{itemize}\addtolength{\itemsep}{0.5\baselineskip}
\item[(i)] If $P \neq A_k,S_k$ then $b(G)=2$;
\item[(ii)] If $k=2$ then $b(G) \in \{3,4\}$; 
\item[(iii)] If $k \geqs 3$ and $A_k \leqs P$ then 
$$b(G) =  \left\lceil \frac{\log k}{\log |T|} \right\rceil+\e$$
with $\e \in \{1,2\}$. 
\end{itemize}
In addition, see \cite[Section 4]{Fawcett} for some interesting probabilistic results concerning bases for diagonal-type groups.
\end{proof}

\subsection{Affine groups}\label{ss:a}

Let $G$ be a primitive affine group of degree $n$. Then $n=p^d$ for a prime $p$, and we have 
$$G = V \rtimes G_0 \leqs V \rtimes {\rm GL}(V) = {\rm AGL}(V)$$
where $V$ is a $d$-dimensional vector space over $\mathbb{F}_{p}$, and $G_0 \leqs {\rm GL}(V)$ is irreducible. In this situation, several special cases of Pyber's conjecture have been verified by various authors, but the general case remains open (see Remark \ref{r:1}).

\begin{thm}\label{t:affine}
Let $G=V \rtimes G_0$ be a primitive affine group, with $|V|=p^d$ for a prime $p$. Then the conclusion to Pyber's conjecture holds if one of the following holds:
\begin{itemize}\addtolength{\itemsep}{0.5\baselineskip}
\item[{\rm (i)}] $G_0$ is solvable;
\item[{\rm (ii)}] $|G_0|$ is indivisible by $p$; 
\item[{\rm (iii)}] $G_0$ acts primitively on $V$.
\end{itemize}
\end{thm}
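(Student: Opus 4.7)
The plan is to decompose the theorem into its three separate cases and, in each case, to invoke a dedicated result already in the literature. A preliminary observation is that since $\log |G|/\log n \geqs 1$ whenever $G$ acts faithfully and transitively, it suffices in each case to produce a base whose size is bounded above by an absolute constant; this is a stronger conclusion than Pyber's inequality, but it is the form in which each of the three existing results is actually proved.

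For case (i), where $G_0$ is solvable, the plan is to appeal to Seress \cite{Seress}. The strategy there exploits the well-developed structure theory of solvable irreducible linear groups (going back to Huppert and Suprunenko), which confines $G_0$ to a short tower of normal subgroups of Singer and extraspecial type. A careful orbit-counting argument inside this structure produces a fixed-size tuple of vectors whose joint stabilizer in $G_0$ is trivial, yielding an absolute bound on $b(G)$.

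For case (ii), where $p \nmid |G_0|$, the plan is to cite Gluck and Magaard \cite{GM}. The key ingredient is coprime action: the hypothesis $\gcd(|G_0|,|V|)=1$ makes Glauberman's lemma and standard fixed-point estimates available, and these can be used to locate a regular orbit of $G_0$ on $V$ except in a short, explicit list of small configurations. Any regular orbit $G_0 v$ gives $b(G) \leqs 2$ (take the zero vector together with $v$), and the finitely many exceptional configurations are dealt with by direct analysis to produce a uniform bound.

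For case (iii), where $G_0$ acts primitively on $V$, the plan is to invoke Liebeck and Shalev \cite{LS}. I expect this to be the main obstacle of the three, because primitive irreducible linear groups form the several geometric families described by Aschbacher's subgroup theorem, so one has to argue family by family. Within each family one combines the available upper bounds on $|G_0|$ in terms of $\dim V$ with orbit-counting or fixed-point-ratio estimates to produce a regular orbit of $G_0$ on $V^r$ for some small absolute $r$, which immediately yields $b(G) \leqs r+1$; a short list of small-dimensional exceptions is then handled by direct inspection.
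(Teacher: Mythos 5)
Your overall strategy coincides with the paper's: the theorem is proved by citing Seress \cite{Seress} for (i), Gluck--Magaard \cite{GM} for (ii), and Liebeck--Shalev \cite{LS} for (iii), and the paper does exactly this. However, your preliminary observation — that in each case one obtains a base of absolutely bounded size, which then trivially implies Pyber's bound since $\log|G|/\log n \geqs 1$ — is correct only for cases (i) and (ii) (where $b(G) \leqs 4$ and $b(G) \leqs 95$, respectively), and is false for case (iii). No absolute bound on $b(G)$ can exist when $G_0$ is a primitive linear group: for instance $G_0 = {\rm GL}_d(p)$ preserves no nontrivial direct sum decomposition of $V = \mathbb{F}_p^d$, yet a base for its action on $V$ must contain a spanning set, so $b(G_0) \geqs d$, which is unbounded. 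What Liebeck and Shalev actually prove in \cite{LS} is a strong form of Pyber's inequality itself, namely $b(G) \leqs c\,\log|G|/\log n$ with an explicit constant $c$, not a bound $b(G) \leqs r+1$ for an absolute $r$ as your sketch claims. So the citation you give for case (iii) is the right one and the theorem does follow, but the internal mechanism you describe (producing a regular orbit of $G_0$ on $V^r$ for some fixed small $r$) cannot be what the proof does in general, and stating it that way would mislead a reader about the content of the Liebeck--Shalev theorem.
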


\begin{proof}
The main theorem of \cite{Seress} states that $b(G) \leqs 4$ for all primitive solvable groups (note that every such group is affine). Similarly, if $|G_0|$ is indivisible by $p$ (the coprime affine case) then $b(G) \leqs 95$ by the main result in \cite{GM}. (In fact, a very recent theorem of Halasi and Podoski \cite{HP} shows that $b(G) \leqs 3$ in this case.) Finally, if $G_0$ is a primitive subgroup of ${\rm GL}(V)$ (that is, $G_0$ does not preserve any nontrivial direct sum decomposition of $V$) then a strong form of Pyber's conjecture (with an explicit constant) is proved in \cite{LS}.
\end{proof}

\subsection{$m$-partitions vs $2$-partitions} \label{ss:part}

Let $G$ be a permutation group on a finite set $\Omega$ and let $(X_0,\ldots,X_{m-1})$ be an $m$-part partition (or \emph{$m$-partition}) of $\Omega$. The stabilizer in $G$ of this partition is defined to be the
intersection of the setwise stabilizers of the $X_i$. Note that the intersection of the setwise stabilizers of the first $m-1$ parts already stabilizes $X_{m-1}$. In particular, the
stabilizer of a $2$-partition is simply the setwise stabilizer of the
first part. 

\begin{prop}\label{m vs 2}
Let $G$ be a permutation group on a finite set $\Omega$ and let $(X_0,\ldots,X_{m-1})$ be an $m$-partition of $\Omega$. Then
there exist $\lceil \log m \rceil$ $2$-partitions of $\Omega$ such that the
stabilizer of $(X_0,\ldots,X_{m-1})$ in $G$ is the intersection of the
stabilizers of these $2$-partitions.
\end{prop}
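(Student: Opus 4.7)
The plan is to use a straightforward binary-encoding argument. Set $r=\lceil \log m\rceil$, so that $2^r\geqs m$, and for each index $i\in\{0,1,\ldots,m-1\}$ write its binary expansion as $i=\sum_{j=0}^{r-1} i_j 2^j$ with $i_j\in\{0,1\}$. For each bit position $j\in\{0,1,\ldots,r-1\}$, I would define a $2$-partition $(Y_j, Y_j')$ of $\Omega$ by
\[
Y_j \;=\; \bigcup_{\{i\,:\,i_j=0\}} X_i, \qquad Y_j' \;=\; \bigcup_{\{i\,:\,i_j=1\}} X_i.
\]
This gives $r$ $2$-partitions, and by construction each part $X_i$ of the original $m$-partition is recovered as the intersection $X_i = \bigcap_{j=0}^{r-1} Z_j^{(i_j)}$, where $Z_j^{(0)}=Y_j$ and $Z_j^{(1)}=Y_j'$.

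The proof of equality of stabilizers then splits into the two obvious inclusions. One direction is immediate: any $g\in G$ fixing every $X_i$ setwise automatically fixes each $Y_j$ (and hence each $Y_j'$) setwise, as $Y_j$ is a union of certain $X_i$'s. For the reverse direction, suppose $g$ lies in the intersection of the stabilizers of the $r$ chosen $2$-partitions. Fix $i$ and note that $g$ sends the set $Z_j^{(i_j)}$ to itself for every $j$, so
\[
g(X_i) \;=\; g\!\left(\bigcap_{j=0}^{r-1} Z_j^{(i_j)}\right) \;=\; \bigcap_{j=0}^{r-1} Z_j^{(i_j)} \;=\; X_i,
\]
showing that $g$ stabilizes the original $m$-partition.

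There is no real obstacle here; the only minor point is verifying that $X_i$ is recovered exactly as the indicated intersection, which follows because the $X_i$ partition $\Omega$ and the bit pattern $(i_0,\ldots,i_{r-1})$ uniquely determines $i$ in the range $0\leqs i\leqs m-1$ (indices $i\geqs m$ simply do not appear, so the corresponding intersections are empty and cause no trouble). This yields $\lceil \log m\rceil$ $2$-partitions with the required property.
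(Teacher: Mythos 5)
Your proof is correct and uses the same binary-encoding idea as the paper: index each $2$-partition by a bit position, so that each part of the $m$-partition is cut out by a bit pattern. The only cosmetic differences are that you work directly with $r=\lceil\log m\rceil$ bit positions rather than casing on $\lfloor\log m\rfloor$, and you phrase the reverse inclusion via the identity $X_i=\bigcap_j Z_j^{(i_j)}$ rather than deriving a contradiction from a bit mismatch; both are tidier but mathematically identical to the paper's argument.
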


\begin{proof}
Let $j \in \{ 0,\ldots, m-1 \}$ and let
$$j  = a_0(j)2^0 + a_1(j)2^1 + \cdots + a_d(j)2^d$$
be the binary expansion of $j$, where $d=\lfloor \log m \rfloor$. For each $i \in \{0, \ldots, d\}$ we set
$$\pi_{i,0} = \{ j \in \{ 0,\ldots, m-1 \} \mid a_i(j)=0\},\; 
\pi_{i,1} = \{j \in \{ 0,\ldots, m-1 \} \mid a_i(j)=1\}$$
and
$$\sigma_{i,0} = \bigcup_{j \in \pi_{i,0}}X_j,\;\; \sigma_{i,1} = \bigcup_{j \in \pi_{i,1}}X_j.$$
Let $\sigma_{i}=(\sigma_{i,0}, \sigma_{i,1} )$ and define $\mathcal{P} = \{\sigma_0, \ldots, \sigma_{d}\}$
if $m>2^d$, and $\mathcal{P} = \{\sigma_0, \ldots, \sigma_{d-1}\}$ if $m=2^d$. Note that $|\mathcal{P}| = \lceil \log m \rceil$, and each $\sigma_i \in \mathcal{P}$ is a $2$-partition of $\Omega$.

Clearly, if $g \in G$ stabilizes $(X_0,\ldots,X_{m-1})$ then $g$ stabilizes each of the $2$-partitions in $\mathcal{P}$. For the converse, suppose $g \in G$ stabilizes each partition in $\mathcal{P}$, but for some $j$ there exists a point $\a \in X_j$ such that $\a^g \not\in X_j$, say $\a^g \in X_k$ with $j \neq k$. Then $a_i(j) \neq a_i(k)$ for some $i$, say $a_{i}(j)=0$ and $a_i(k)=1$. But this implies that $\a \in \s_{i,0}$ and $\a^g \in \s_{i,1}$, which is a contradiction since $g$ stabilizes $\s_i$. 
\end{proof}

\subsection{Distinguishing number}\label{ss:dn}

Let $G$ be a permutation group on a finite set $\Omega$. The \emph{distinguishing number} of $G$, denoted by $D(G)$, is defined to be the smallest number of parts in a partition of $\Omega$ with the property that only the identity fixes every part. 
Such a partition is called a \emph{distinguishing partition}. 
Note that $D(G)=1$ if and only if $G$ is trivial, and $D(G)=2$ if and only if $G$ has a regular orbit on the power set of $\Omega$. We also note that $D(S_n)=n$ and $D(A_n)=n-1$, with respect to the natural actions of degree $n$.

\begin{thm}\label{t:dolfi}
Let $G$ be a primitive permutation group of degree $n$, and assume $G \neq A_n,S_n$.
Then $D(G) \leqs 4$.
\end{thm}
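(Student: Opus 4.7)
The plan is to reduce the statement to a classical regular-orbit theorem on the power set. By work of Cameron, Neumann and Saxl, sharpened by Seress (using the classification of finite simple groups), every primitive permutation group $G$ of degree $n$ with $G \not\geqs A_n$ has a regular orbit on the power set $2^{\Omega}$, apart from an explicit finite list of exceptions in bounded degree. Whenever such a regular orbit exists, one picks a subset $\Gamma \subseteq \Omega$ with $G_{\Gamma} = 1$; the $2$-partition $(\Gamma, \Omega \setminus \Gamma)$ is then distinguishing and gives the stronger bound $D(G) = 2$.

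It then remains to verify $D(G) \leqs 4$ for each group in the finite exceptional list. By Proposition~\ref{m vs 2}, for this it suffices, for each exceptional $G$, to exhibit two subsets $A, B \subseteq \Omega$ satisfying $G_A \cap G_B = 1$: the associated $4$-partition whose parts are the four intersections of $A, \Omega \setminus A$ with $B, \Omega \setminus B$ then has trivial common stabilizer, so $D(G) \leqs 4$. Since the exceptional list consists of explicitly named almost simple and affine primitive groups of small degree, each such pair $(A, B)$ can be produced by a combination of direct combinatorial construction and finite computer check.

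The main obstacle lies in this second step. By the very definition of the exceptional list, no single subset of $\Omega$ has trivial setwise stabilizer, so one cannot settle the bound at the level of $2$-partitions. One must pair two subsets whose setwise stabilizers intersect trivially, and since the point stabilizers in the exceptional groups are relatively large compared to the degree, the choice of $A$ and $B$ must be made with some care. Nonetheless, because the list is finite and explicit, a routine case analysis yields $D(G) \leqs 4$ in every remaining case, completing the proof.
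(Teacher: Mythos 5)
Your proposal takes essentially the same route as the paper: the paper's proof simply cites Seress \cite{Akos1} for the classification of the $43$ primitive groups $G \neq A_n, S_n$ with $D(G) > 2$ (equivalently, with no regular orbit on the power set), and then cites Dolfi \cite{Dolfi}, who carried out precisely the finite case analysis you describe to verify $D(G) \leqs 4$ for each exceptional group. One minor slip: the reduction from two subsets $A, B$ with $G_A \cap G_B = 1$ to a distinguishing $4$-partition is an elementary common-refinement argument and does not require Proposition \ref{m vs 2}, which in fact goes in the opposite direction (from an $m$-partition to $\lceil \log m \rceil$ $2$-partitions).
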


\begin{proof}
By a theorem of Seress \cite{Akos1}, there are exactly $43$ possibilities for $G$ with $D(G)>2$. In a later paper, Dolfi \cite{Dolfi} showed that $D(G) \leqs 4$ in each of the exceptional cases. 
\end{proof}

\begin{cor}\label{c:dolfi}
Let $G$ be a primitive permutation group of degree $n$, and assume $G
\neq A_n,S_n$ if $n \geqs 7$. Then there exist three $2$-partitions of
$[n]$ such that the intersection of the stabilizers of these
partitions is trivial.
\end{cor}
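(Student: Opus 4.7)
The plan is to combine Theorem \ref{t:dolfi} with Proposition \ref{m vs 2} directly. Theorem \ref{t:dolfi} guarantees the existence of a distinguishing partition of $[n]$ with few parts, and Proposition \ref{m vs 2} converts the stabilizer of any $m$-partition into the intersection of the stabilizers of $\lceil \log m \rceil$ $2$-partitions. So it suffices to bound $D(G)$ and feed the resulting distinguishing partition into Proposition \ref{m vs 2}.

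First I would handle the generic case $n \geq 7$. The hypothesis rules out $G \in \{A_n, S_n\}$, so Theorem \ref{t:dolfi} gives $D(G) \leqs 4$, and $[n]$ admits a distinguishing partition with $m \leqs 4$ parts. Applying Proposition \ref{m vs 2} produces $\lceil \log m \rceil \leqs \lceil \log 4 \rceil = 2$ $2$-partitions whose stabilizers in $G$ meet only in the identity.

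For the remaining small cases $n \leqs 6$, either $G \neq A_n, S_n$ and Theorem \ref{t:dolfi} again yields $D(G) \leqs 4$, or $G \in \{A_n, S_n\}$, in which case the explicit values $D(S_n) = n$ and $D(A_n) = n - 1$ recorded immediately before Theorem \ref{t:dolfi} give $D(G) \leqs n \leqs 6$. In all cases $D(G) \leqs 6$, so Proposition \ref{m vs 2} produces at most $\lceil \log 6 \rceil = 3$ $2$-partitions whose stabilizers intersect trivially. If strictly fewer than three arise, one adjoins any further $2$-partition: the joint stabilizer was already trivial, so it remains trivial after this padding.

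There is really no obstacle here; the corollary is a direct consequence of the two preceding results, the only care needed being the book-keeping for the small-degree alternating and symmetric groups, which is dispatched by their explicit distinguishing numbers. Note that the constant $3$ is essentially tight for this argument, since $D(S_6) = 6$ forces $\lceil \log 6 \rceil = 3$ in the worst case.
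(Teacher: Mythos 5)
Your argument is correct and follows the same route as the paper: bound $D(G)$ by $6$ and invoke Proposition \ref{m vs 2} to get $\lceil \log 6 \rceil = 3$ two-partitions. You are in fact slightly more careful than the paper, which simply asserts ``By Theorem \ref{t:dolfi}, $D(G) \leqs 6$'' even though that theorem only covers $G \neq A_n, S_n$; your explicit handling of $G \in \{A_n, S_n\}$ with $n \leqs 6$ via the recorded values $D(S_n)=n$ and $D(A_n)=n-1$ fills in the intended (but unspoken) casework.
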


\begin{proof}
By Theorem~\ref{t:dolfi}, $D(G) \leqs 6$. Hence, by Proposition~\ref{m
  vs 2}, there exist $3=\lceil \log 6 \rceil$ 2-partitions with the
required property.
\end{proof}

\section{Product-type groups}\label{s:pa}

In view of Theorems \ref{t:as} and \ref{t:d}, in order to prove Theorem \ref{tt:main} we may assume that $G$ is either a product-type group (as in Case III(b) in Table \ref{t:onan}), or a twisted wreath product (Case III(c) in Table \ref{t:onan}). As we will see in Section \ref{s:tw}, the result for twisted wreath products is an easy corollary of the corresponding result for product-type groups, which is the case we focus on in this section.

To get started, let us recall the general set-up for product-type groups. Referring to Table \ref{t:onan}, let $H \leqs {\rm Sym}(\Gamma)$ be a primitive group of type II (almost simple) or III(a) (diagonal). Let $k \geqs 2$ be an integer and consider the wreath product $W = H \wr S_{k}$. This group has a natural product action on the Cartesian product $\Omega=\Gamma^k$, given by
\begin{equation}\label{e:prodd}
(\gamma_1, \ldots, \gamma_{k})^{(h_1, \ldots, h_{k})p^{-1}} = (\gamma_{1^{p}}^{h_{1^{p}}}, \ldots, \gamma_{k^{p}}^{h_{k^{p}}}).
\end{equation}
Let $T={\rm Soc}(H)$ and $B = {\rm Soc}(W)$, so $B=T^k$.
Following \cite{LPS}, a subgroup $G \leqs W$ acting on $\Omega$ is a primitive product-type group if
\begin{itemize}\addtolength{\itemsep}{0.5\baselineskip}
\item[(i)] $B \leqs G$; and
\item[(ii)] $G$ induces a transitive group $P \leqs S_k$ on the $k$ factors of $T^k$.
\end{itemize}
In particular, note that 
$${\rm Soc}(G) = T^k \leqs G \leqs H \wr P.$$

Our proof of Pyber's conjecture for product-type groups is based on the following theorem (recall that $[k]=\{1, \ldots, k \}$). 

\begin{thm}\label{t:main}
Let $P \leqs S_k$ be a transitive permutation group. Then there exist
$$O\left(1+\frac{\log |P|}{k}\right)$$
$2$-partitions of $[k]$ with the property that the intersection of the stabilizers of these partitions in $P$ is trivial.
\end{thm}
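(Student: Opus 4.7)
I would proceed by induction on $k$, with the base case $k=1$ trivial. In the inductive step, I split into the primitive and imprimitive cases. In the \emph{primitive case}, if $P\neq A_k,S_k$ then Corollary~\ref{c:dolfi} immediately provides three $2$-partitions of $[k]$ whose combined stabilizer in $P$ is trivial, and $3=O(1+\log|P|/k)$. If $P\in\{A_k,S_k\}$ I apply Proposition~\ref{m vs 2} to the partition of $[k]$ into singletons to obtain $\lceil\log k\rceil$ $2$-partitions; since $\log|P|=\Theta(k\log k)$ in that case, this is $O(\log|P|/k)$.

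For the \emph{imprimitive case}, fix a non-trivial block system $\{B_1,\ldots,B_m\}$ of block size $b$ with $1<b<k$, and let $Q\leq S_m$ be the induced action on blocks, $K=\ker(P\to Q)$, and $K_j=K|_{B_j}$ (all isomorphic as permutation groups via $P$-conjugacy to a common $K_1$). Since a family of $r$ $2$-partitions of $[k]$ is the same data as a coloring $\phi\colon[k]\to\{0,1\}^r$ and the combined stabilizer is trivial iff $\phi$ has trivial $P$-stabilizer, it suffices to produce such a $\phi$ with $r=O(1+\log|P|/k)$. The plan is to build $\phi$ in two layers: first, make each block coloring $\phi|_{B_j}\colon B_j\to\{0,1\}^r$ a distinguishing coloring for $K_j$, so that any $g\in K$ fixing $\phi$ must have $g|_{B_j}=1$ for every $j$; second, arrange that the induced map $j\mapsto [\phi|_{B_j}]$ assigning to each block the $K_1$-orbit of its coloring is a distinguishing coloring of $[m]$ for $Q$, so that the image in $Q$ of any $P$-element fixing $\phi$ is trivial.

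By induction applied to $K_1$ on $B_1$, within-block distinguishability is possible with $r=O(1+\log|K_1|/b)$. Applying induction to $Q$ on $[m]$ gives $D(Q)\leq 2^{O(1+\log|Q|/m)}$, and to realize that many distinct $K_1$-orbits of distinguishing colorings of $B_1$ it is enough (by a standard counting estimate: the number of $K_1$-orbits on $[2^r]$-colorings of $B_1$ is at least $2^{rb}/|K_1|$) to have $r\geq(\log|K_1|+\log D(Q))/b+O(1)$. Combining these constraints and using the exact sequence $1\to K\to P\to Q\to 1$ together with $|K|\leq|K_1|^m$ yields $r=O(1+\log|P|/k)$ in the ``balanced'' regime where $\log|K|\asymp m\log|K_1|$.

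\textbf{Main obstacle.} The hardest step, extending the combinatorics of \cite[Theorem~4.1]{GSS}, is when $K$ is a strict subdirect product of the $K_j$ (so $\log|K|\ll m\log|K_1|$) or when the block system is very unbalanced. In these regimes the within-block cost $\log|K_1|/b$ can strictly exceed $O(\log|P|/k)$, and the naive ``one block coloring at a time'' construction above becomes lossy (as one sees already for $P=S_n\times S_n$ acting on $[n]\times[n]$, where $\log|P|/k=\Theta(\log n/n)$ while my construction spends $\Theta(\log n)$ bits within a single block). Overcoming this either requires a careful choice of block system, or a genuinely joint construction in which the tuple $(\phi|_{B_j})_j$ distinguishes $K\leq\prod_j K_j$ even when no individual $\phi|_{B_j}$ is distinguishing for $K_j$. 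Making this work uniformly for every transitive $P$ is where I expect the real technical work to lie.
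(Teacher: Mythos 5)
Your primitive case coincides with the paper's (Proposition~\ref{p:prim}). For the imprimitive case, however, you have correctly flagged a genuine gap that your construction does not close, so the proposal is incomplete. When $K = \ker(P \to Q)$ is a strict subdirect product of the block restrictions $K_j$, one has $\log|K| \ll m\log|K_1|$, so the within-block budget $\Theta(\log|K_1|/b)$ of your two-layer scheme can exceed $\log|P|/k$ by an unbounded factor, as your own example shows. A single block system with a ``distinguish within blocks, then distinguish block labels'' construction cannot repair this by itself, because it charges the full cost of distinguishing $K_1$ once per block even though the linked blocks carry the same information.

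The paper's proof (Propositions~\ref{p:1}--\ref{p:3}) overcomes exactly this obstruction with two ideas you do not attempt. First, instead of one block system it fixes a \emph{structure tree}: a maximal chain of block systems down to singletons, so every local action $P(x)=P_x^{\Delta(x)}$ is primitive. All levels where $P(x)$ is not $A_m$ or $S_m$ (with $m\geqs 7$) are handled simultaneously by a GSS-style argument: three global $\mathbb{F}_3$-colorings of the tree, contributing six $2$-partitions in total, independent of how many such levels there are. Second, at a ``large'' level $T_\ell$ (branching $m$, with $|T_{\ell-1}|=a$), the subtree below each vertex of $T_\ell$ has $k/(am)$ leaves, so the $2$-colorings of $\Delta$ already provide $\chi=2^{k/(am)}$ distinct symbols at $T_\ell$ for free; one works with base-$\chi$ expansions, reducing the number of colorings needed from $\approx \log m$ to $\approx (am/k)\log m$. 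Crucially, $P^{T_\ell}\cap \mathrm{Soc}(S_m\wr S_a)$ is a product of diagonal subgroups $D_j\cong A_m$ with linking factor $t$: a vertex stabiliser acts on all $t$ linked sibling blocks by the \emph{same} element of $A_m$ up to fixed twists $\alpha_{j,i}$, so each single $\chi$-coloring $\sigma_j$ of $T_\ell$ can carry $t$ of the required $\chi$-partitions $\pi_{jt},\ldots,\pi_{jt+t-1}$, one per linked block. This divides the count by $t$, giving $\lceil(d+1)/t\rceil$ colorings where $d+1\approx (am/k)\log m$, which is matched against $\log|P|\geqs \log|A_m|^{a/t}=\Theta\bigl((am/t)\log m\bigr)$. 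This spreading across linked blocks is precisely the ``genuinely joint construction'' you say would be needed, and it is the technical heart of the proof; your plan locates the obstacle correctly but does not resolve it.
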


Note that this result is essentially best possible. For example, if $P=S_k$ then it is easy to see that at least $\lceil \log k\rceil = O((\log |P|)/k)$ $2$-partitions are required.

The proof of Theorem \ref{t:main} is given in Sections \ref{ss:prim} and \ref{ss:imprim}, where we deal separately with the primitive and imprimitive cases.  In Section \ref{ss:pybpt}, we will use Theorem \ref{t:main} to establish Pyber's conjecture for all primitive product-type groups.

\subsection{The primitive case}\label{ss:prim}

\begin{prop}\label{p:prim}
The conclusion to Theorem \ref{t:main} holds if $P$ is primitive.
\end{prop}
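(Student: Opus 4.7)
The plan is to split into two cases according to whether or not $P$ contains the alternating group $A_k$. The dividing line is natural: if $P$ has bounded distinguishing number we can produce $O(1)$ 2-partitions via Corollary~\ref{c:dolfi}, while if $P \in \{A_k, S_k\}$ we get a $\log k$-sized family, but $|P|$ is so large that $\log k = O(\log|P|/k)$, so the bound still holds. There is essentially no genuine obstacle here; the only thing to check is that the arithmetic absorbs cleanly into the $O(1 + \log|P|/k)$ budget.

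First, suppose either $k \leqs 6$, or $k \geqs 7$ and $P \neq A_k, S_k$. Then Corollary~\ref{c:dolfi} supplies three 2-partitions of $[k]$ whose stabilizers in $P$ have trivial intersection, and $3 = O(1) = O(1+\log|P|/k)$, as required.

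Next, suppose $k \geqs 7$ and $P \in \{A_k, S_k\}$. Consider the $k$-partition of $[k]$ into singletons $(\{1\}, \{2\}, \ldots, \{k\})$. Its stabilizer in $S_k$ (and hence in $P$) is trivial, so Proposition~\ref{m vs 2} yields $\lceil \log k \rceil$ 2-partitions of $[k]$ whose stabilizers in $P$ intersect trivially. To see that $\lceil \log k \rceil = O(1 + \log|P|/k)$, note that $|P| \geqs k!/2$, so by Stirling's formula
\[
\log |P| \geqs \log(k!/2) \geqs k\log k - ck
\]
for some absolute constant $c > 0$, whence $\log k \leqs \log|P|/k + c$ and therefore $\lceil \log k \rceil = O(1 + \log|P|/k)$. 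Combining both cases completes the proof.
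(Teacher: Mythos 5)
Your proof is correct and follows the same route as the paper: Corollary~\ref{c:dolfi} gives three 2-partitions when $P$ has small distinguishing number, and for $P \in \{A_k, S_k\}$ you apply Proposition~\ref{m vs 2} to the singleton partition and absorb $\lceil \log k \rceil$ into $O(1 + \log|P|/k)$ using $|P| \geqs k!/2$. The paper states the final estimate as $\log|P|/k \geqs c\log k$ rather than writing out Stirling, but this is a purely cosmetic difference.
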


\begin{proof}
If $k \leqs 6$, or if $k \geqs 7$ and $P \neq A_k,S_k$, then Corollary \ref{c:dolfi} states
three $2$-partitions of $[k]$ are enough. 

Now assume $k \geqs 7$ and $P=A_k$ or $S_k$. By applying
Proposition \ref{m vs 2} for the partition of $[k]$ into $1$-element
sets, we deduce that there exist $\lceil \log k \rceil$ 2-partitions
of $[k]$ such that the intersection of the stabilizers of these
partitions is trivial. 
Moreover, since $P=A_k$ or $S_k$ we have 
$$\frac{\log |P|}{k} \geqs c \log k$$
for some absolute constant $c$, so 
$$ \lceil \log k \rceil  = O\left(1+\frac{\log |P|}{k}\right)$$
as required.
\end{proof}

\subsection{The imprimitive case}\label{ss:imprim}

To complete the proof of Theorem \ref{t:main}, we may assume $P \leqs S_k$ is transitive but imprimitive. This situation is rather more difficult than the primitive case handled in the previous section. We begin by introducing some new notation and terminology that we will use throughout this section.

\vs

\noindent \textbf{Structure trees.} Set $\Delta=[k]$. Following the proof of \cite[Theorem 4.1]{GSS} we fix a \emph{structure tree} 
encoding the action of $P$ on $\Delta$. This is a rooted tree $T$ with levels $T_0, T_1, \ldots, T_s$, where the root is $T_0 = \{\Delta\}$ and the leaves are the points in $\Delta$ (that is, $T_s = \Delta$). We set $|T_i|=a_i$, so $a_0=1$ and $a_s=k$. The action of $P$ on $\Delta$ can be extended naturally to the structure tree. We require that the vertices on a fixed level of $T$ define a partition of $\Delta$ into a block system, so $P$ acts transitively on each level of $T$. We also require that if $x \in T_i$ is a non-leaf vertex with children $\Delta(x) \subseteq T_{i+1}$ then $\Delta(x)$ is a partition of $x$, and $P(x):=P_x^{\Delta(x)}$ is a primitive group (that is, the setwise stabilizer of $x$ in $P$, denoted by $P_x$, acts primitively on $\Delta(x)$). Note that if $x,y \in T_i$ then $|\Delta(x)| = |\Delta(y)|$ and the induced groups $P(x)$ and $P(y)$ are permutation isomorphic. For $x \in T_i$ (with $i<s$) we set $|\Delta(x)|=m_{i+1}$, so $a_{i} = \prod_{j=1}^{i}m_j$ for all $i \in \{0,1, \ldots, s\}$.

\vs

\noindent \textbf{Large levels.}  Consider a level $T_i$ in $T$ with $i>0$. Now $P$ induces a transitive permutation group 
$$P^{T_i} \leqs S_{m_i} \wr S_{a_{i-1}} \leqs S_{a_i}$$ 
on the $a_i = m_ia_{i-1}$ vertices at level $T_i$. We will say that $T_i$ is a \emph{large} level of $T$ if $m_i \geqs 7$ and $P(x)=A_{m_i}$ or $S_{m_i}$ for some (hence all) $x \in T_{i-1}$. 

Assume $T_i$ is large, and set 
$$B_i = {\rm Soc}(S_{m_i} \wr S_{a_{i-1}}) = C_1 \times C_2 \times \cdots \times C_{a_{i-1}} = (A_{m_i})^{a_{i-1}}.$$ 
Since $A_{m_i} \leqs P(x)$ for all $x \in T_{i-1}$, it follows that $P^{T_{i}} \cap B_i$ is a subdirect product of $B_i$, so $P^{T_{i}} \cap B_i = \prod_{j}D_j$ is a direct product and each $D_j \cong A_{m_i}$ is a diagonal subgroup of a subproduct $\prod_{\ell \in I_{j}}C_{\ell}$, where the subsets $I_{j}$ form a partition of $[a_{i-1}]$ (see \cite[p.328, Lemma]{Scott}). Since $P$ acts transitively on the $a_{i-1}$ vertices in level $T_{i-1}$, it follows that there exists a divisor $t_i$ of $a_{i-1}$ such that $|I_{j}|=t_{i}$ for all $j$. We call $t_i$  the \emph{linking factor} of the level $T_i$ (note that $P^{T_{i}}$ contains the full direct product $B_i$ if and only if $t_i=1$). Therefore, 
$$P^{T_{i}} \cap B_i = \prod_{j=1}^{a_{i-1}/t_{i}}D_j$$
and
$$D_j  = \{(z,z^{\a_{j,1}}, \ldots, z^{\a_{j,t_i-1}}) \mid z \in A_{m_i}\} \cong A_{m_i}$$ 
is a diagonal subgroup of the direct product $\prod_{\ell \in I_{j}}C_{\ell} = (A_{m_i})^{t_i}$, with $\a_{j,\ell} \in {\rm Aut}(A_{m_i})$, $1 \leqs \ell \leqs t_{i}-1$. Note that ${\rm Aut}(A_{m_i}) = S_{m_i}$ since $m_i \geqs 7$. The large levels of $T$ will require special attention in the proof of Theorem \ref{t:main}. 

\vs

We start by considering two special cases, which provide the basis for the general argument given in Proposition \ref{p:3}. For the remainder of this section we will freely adopt the notation and terminology introduced above. 

\begin{prop}\label{p:1}
The conclusion to Theorem \ref{t:main} holds if $T$ has no large levels. In fact, in this situation at most six partitions suffice.
\end{prop}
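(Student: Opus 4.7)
The plan is to prove Proposition \ref{p:1} by induction on the depth $s$ of the structure tree $T$. In the base case $s=1$, the single level $T_1$ is not large, so $P$ acts primitively on $T_1=[k]$ with either $k\le 6$ or $P\ne A_k,S_k$; Corollary \ref{c:dolfi} then yields three 2-partitions of $[k]$ with trivial joint stabilizer, which is at most six as required.

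For the inductive step with $s\ge 2$, the idea is to peel off the top level first. Since $T_1$ is not large, the primitive group $P^{T_1}$ of degree $m_1$ either acts on at most six points or is not $A_{m_1}$ or $S_{m_1}$. Applying Corollary \ref{c:dolfi} produces three 2-partitions $\pi_1,\pi_2,\pi_3$ of $T_1$ whose joint $P^{T_1}$-stabilizer is trivial. Pulling each back to $[k]$ by replacing $y\in T_1$ with its leaf set $L(y)$ gives three 2-partitions $\tilde\pi_1,\tilde\pi_2,\tilde\pi_3$ of $[k]$ whose joint $P$-stabilizer is exactly $K_1:=\ker(P\to P^{T_1})$. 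Since $K_1$ embeds as a subdirect product of $\prod_{x\in T_1}K_1^{L(x)}$ and $P$ acts transitively on $T_1$ making the $L(x)$'s $P$-isomorphic, one can transport three 2-partitions of a single reference leaf set $L(x_0)$ coherently to all $L(x)$ at once, thereby extracting three additional 2-partitions of $[k]$ whose restrictions trivialize $K_1^{L(x)}$ on each $L(x)$.

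The main obstacle is that the inductive hypothesis applied to the depth-$(s-1)$ action on $L(x_0)$ would provide six 2-partitions rather than three, pushing the global total to nine. To close this gap down to six I would need to strengthen the inductive statement---for instance to a more refined claim that separates the 2-partitions needed at the top level from those needed below inside a kernel, exploiting that $K_1^{L(x)}$ is typically a proper subgroup of the full depth-$(s-1)$ primitive tree group on $L(x)$ and so inherits additional structure that lets one get by with fewer partitions. Alternatively, one can bypass the induction and construct the six 2-partitions directly: for each non-leaf $x\in T$, one arranges that the parities (or a similar statistic) of the restrictions of the six 2-partitions to the leaf sets $L(y)$, as $y$ ranges over $\Delta(x)$, yield a $P(x)$-distinguishing coloring of $\Delta(x)$; since $D(P(x))\le 6\le 2^6$ at every non-leaf $x$, six bits per vertex are information-theoretically just sufficient, and the task becomes that of coordinating the choice globally across all non-leaf vertices. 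This combinatorial coordination, extending ideas from the proof of \cite[Theorem 4.1]{GSS}, is the technical heart of the argument.
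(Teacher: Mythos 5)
Your instinct that the naive induction overshoots to nine partitions is correct, and your second sketch points in the right general direction, but you have not identified the mechanism that makes six suffice, and the specific idea you float (``parities of the restrictions to leaf sets'') would not work. The paper's construction proceeds as follows. For each non-leaf vertex $x$, Corollary~\ref{c:dolfi} supplies three $2$-partitions $\Delta_1(x),\Delta_2(x),\Delta_3(x)$ of $\Delta(x)$ with trivial joint stabilizer in $P(x)$. One then defines three colorings $F_j \colon T \to \mathbb{F}_3$ \emph{of the whole tree}, top-down, by $F_j(\mathrm{root})=0$ and $F_j(y)=F_j(x)$ or $F_j(x)+1$ according as the child $y$ lies in $\Delta_j(x)$ or not. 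Any $p\in P$ fixing all three colorings of $T$ must fix $T$ pointwise, by induction down the levels. The crucial observation---which is exactly what your sketch is missing---is that each $F_j$-coloring of $T$ is \emph{reconstructible from its restriction to the leaves}: the children of $x$ take exactly two colors $\{c,c+1\}$ in $\mathbb{F}_3$, and because $|\mathbb{F}_3|$ is odd this unordered pair determines $c=F_j(x)$ uniquely. This fails over $\mathbb{F}_2$ (the pair $\{c,c+1\}$ is always $\{0,1\}$), so a parity-based statistic cannot carry the cumulative state down the tree; the ternary alphabet is essential. With reconstruction in hand, fixing the three $3$-colorings of the leaves forces fixing the three colorings of $T$, hence triviality. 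Finally each $3$-coloring of $\Delta$ is a $3$-partition, and Proposition~\ref{m vs 2} turns each into $\lceil\log 3\rceil = 2$ two-partitions, giving six in total.

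So the gap is concrete: you need (a) a cumulative coloring on the tree whose per-level increments encode the local distinguishing $2$-partitions, and (b) the odd-alphabet reconstruction lemma that lets one pass from leaf colorings to tree colorings. Without (b), knowing that $p$ preserves your leaf $2$-partitions does not let you conclude that $p$ respects the local distinguishing structure at each internal $x$, which is the step you labeled ``the technical heart'' but did not supply. Your information-theoretic bound $D(P(x))\le 6\le 2^6$ is also off target: each $P(x)$ needs only $\lceil\log 6\rceil=3$ two-partitions, and the difficulty is not a bit-count but the global coherence; the paper achieves coherence precisely through the $\mathbb{F}_3$-incremental coloring.
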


\begin{proof}
Let $x \in T$ be a non-leaf vertex and recall that $P(x)$ acts primitively on $\Delta(x)$. By Corollary \ref{c:dolfi}, there exist three $2$-partitions 
$$\Delta(x) = \Delta_j(x) \cup (\Delta(x) \setminus \Delta_j(x)), \; 1 \leqs j \leqs 3$$
with the property that the intersection of the stabilizers of these
partitions in $P(x)$ is trivial. We now proceed as in the proof of
\cite[Theorem 4.1]{GSS} and \cite[Theorem 1.2]{Seress}.

We inductively define three $3$-colorings of the vertices of $T$, denoted $F_j:T \to \mathbb{F}_{3}$, $1 \leqs j \leqs 3$. First, color the root vertex $0$. Suppose that the $F_j$-coloring of the levels $T_0, \ldots, T_i$ has already been defined and let $x \in T_i$. Then for $y \in \Delta(x)$ we define 
\begin{equation}\label{e:fj}
F_j(y) = \left\{\begin{array}{ll}
F_j(x) & \mbox{if $y \in \Delta_j(x)$} \\
F_j(x)+1 & \mbox{if $y \in \Delta(x) \setminus \Delta_j(x)$}. 
\end{array}\right.
\end{equation}

Suppose $p \in P$ fixes all three colorings of $T$. By induction on $i=0,1,\ldots, s$, a color-preserving permutation of $T_0 \cup T_1 \cup \cdots \cup T_i$ must fix $T_0 \cup T_1 \cup \cdots \cup T_i$ pointwise. In particular, $p$ fixes $T_s=\Delta$ pointwise, so $p=1$. 

Next observe that each $F_j$-coloring of the entire tree $T$ can be reconstructed from the corresponding coloring of the leaves. Moreover, if $p \in P$ fixes the $F_j$-coloring of the leaves, then by induction on $i=s,s-1, \ldots, 0$, we see that $p$ must also fix the $F_j$-coloring of $T_i \cup T_{i+1} \cup \cdots \cup T_{s}$. Therefore, if $p$ fixes all three colorings of the leaves then $p$ fixes all three colorings of $T$, so $p=1$ by the previous argument. 

Now each $3$-coloring of the leaves corresponds to a $3$-partition of $\Delta$, say $(X_1,X_2,X_3)$. By Proposition \ref{m vs
  2}, there exist $2=\lceil \log 3 \rceil$ $2$-partitions of $\Delta$ such that
the intersection of their stabilizers also stabilizes $(X_1,X_2,X_3)$.
Therefore, we can define six $2$-partitions of $\Delta$ so that if $p$ stabilizes each of these partitions then $p$ must preserve all three $F_j$-colorings of the leaves of $T$, and by the above remarks this implies that $p=1$.
\end{proof}

\begin{prop}\label{p:2}
The conclusion to Theorem \ref{t:main} holds if $T$ has a unique large level.
\end{prop}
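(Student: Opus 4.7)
My strategy is to augment the inductive tree-colouring of Proposition~\ref{p:1} with a ``pattern-injection'' construction tailored to the unique large level $T_i$. At every non-large non-leaf vertex of $T$ I would take three distinguishing 2-partitions supplied by Corollary~\ref{c:dolfi}, and at each large vertex $x \in T_{i-1}$ just pick three \emph{arbitrary} 2-partitions of $\Delta(x)$. Propagating these choices through three $\mathbb{F}_3$-valued colourings $F_1,F_2,F_3$ exactly as in Proposition~\ref{p:1} yields six global 2-partitions of $\Delta$, and the same inductive argument shows that any $p \in P$ stabilising all six must fix $T_0,\ldots,T_{i-1}$ pointwise, while on $\Delta(x)$ for each $x\in T_{i-1}$ it induces an (a priori nontrivial) element of $P(x)\in\{A_{m_i},S_{m_i}\}$.

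To kill this residual action I would add $q = \lceil(\log m_i)/(t_i r_i)\rceil$ further 2-partitions, where $r_i=m_{i+1}\cdots m_s$ is the number of descendant leaves of each $y\in T_i$. Any choice of $q$ 2-partitions of $\Delta$ assigns to every $y\in T_i$ a colour vector in $\{0,1\}^{qr_i}$ (the concatenation, over the $q$ partitions, of the $r_i$ colours of its descendant leaves), and for each diagonal block $I_j \subseteq [a_{i-1}]$ of size $t_i$ the further concatenation of these colour vectors across the $t_i$ vertices of $I_j$, after unwinding the twists $\alpha_{j,\ell}$, yields a map $Q_j\colon [m_i]\to\{0,1\}^{qt_ir_i}$. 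A short calculation shows that a diagonal element $(z,z^{\alpha_{j,1}},\ldots,z^{\alpha_{j,t_i-1}})\in D_j$ preserves all $q$ partitions if and only if $z\in\mathrm{Stab}_{A_{m_i}}(Q_j)$. Since $2^{qt_ir_i}\geqs m_i$ and the $D_j$'s occupy pairwise disjoint supports in $\Delta$, the $q$ partitions can be designed independently on each $D_j$ so that each $Q_j$ is injective on $[m_i]$; this forces $z=1$, so $p$ fixes $\Delta(x)$ pointwise for every $x\in T_{i-1}$ and hence fixes $T_i$ pointwise. The usual downward propagation of the $F_j$-colourings through the non-large levels below $T_i$ then forces $p=1$.

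The main subtlety is handling the diagonal twists when realizing $Q_j$; this is done simply by setting $P^{(\ell)}(b) = Q_j(\alpha_{j,\ell}(b))$ restricted to the $\ell$-th block on each coordinate $\ell\in I_j$, which amounts to an unconstrained choice of a $qr_i$-bit vector at each child of each $\ell$. The counting bound then follows from $|P| \geqs |P^{T_i}\cap B_i| = |A_{m_i}|^{a_{i-1}/t_i}$ together with $\log|A_{m_i}| = \Theta(m_i\log m_i)$ and $k = a_{i-1}m_ir_i$, which give
$$\frac{\log|P|}{k} \;\geqs\; \frac{(a_{i-1}/t_i)\log|A_{m_i}|}{a_{i-1}m_ir_i} \;=\; \Omega\!\left(\frac{\log m_i}{t_ir_i}\right),$$
so that $6+q = O(1 + (\log m_i)/(t_ir_i)) = O(1+\log|P|/k)$, as required.
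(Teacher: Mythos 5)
Your proposal mirrors the paper's strategy closely: three global colourings carrying no information at the large level (yielding six $2$-partitions), augmented by about $(\log m_i)/(t_i r_i)$ extra $2$-partitions that, via the twisted diagonal structure, encode an injective map $Q_j$ on $[m_i]$ and thereby annihilate the residual $A_{m_i}$-action; the final count against $\log|P|/k$ is also correct. However, the assertion that \emph{``a diagonal element $(z,\ldots)$ preserves all $q$ partitions if and only if $z\in\mathrm{Stab}_{A_{m_i}}(Q_j)$''} is not the short calculation you take it to be, and is in fact false as stated. An element $p\in P$ that fixes $T_{i-1}$ pointwise and preserves the $q$ $2$-colourings of $\Delta$ can map a vertex $y\in T_i$ to a sibling $y'$ while also permuting their subtrees nontrivially, so that $y$ and $y'$ have distinct \emph{ordered} colour vectors even though every leaf colour is preserved. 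For instance, two leaves below $y$ coloured $(0,1)$ can be sent to two leaves below $y'$ coloured $(1,0)$, preserving the $2$-colourings but swapping the ordered pair. Hence injectivity of $Q_j$ does not, on its own, force $z=1$.

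The missing ingredient is precisely Claim 2 of the paper's proof of this proposition: one must first argue, by downward induction through the intermediate non-large levels $T_{i+1},\ldots,T_{s-1}$, that any $p$ preserving the distinguishing $F_j$-colourings is forced to respect the tree identifications below $T_i$, and hence to fix the $\chi$-colouring induced at level $T_i$ by each of the $q$ additional $2$-colourings. Only with that rigidification in hand does the injectivity of $Q_j$ yield $z=1$. In your write-up the downward propagation of the $F_j$-colourings appears only as a concluding step \emph{after} $z=1$ has been declared, whereas it is actually needed \emph{to establish} that conclusion. Once this ordering is corrected and the rigidification argument is made explicit, the remainder of your construction --- the independent design of injective $Q_j$ on the disjoint supports, and the bound $6+q = O(1+\log|P|/k)$ --- goes through.
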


\begin{proof}
Let $T_{\ell}$ be the unique large level of $T$, where $1 \leqs \ell \leqs s$. Let $t$ be the corresponding linking factor. As in the proof of the previous proposition, we start by defining three \emph{global} colorings of $T$, denoted $F_j:T \to \mathbb{F}_{3}$, $1 \leqs j \leqs 3$. These colorings are defined inductively as in \eqref{e:fj} (starting at the root, setting $F_j(T_0)=0$ for all $j$), except that if $x \in T_{\ell-1}$ then we set $\Delta_j(x) = \Delta(x)$ for all $j$ (that is, all the children of $x$ inherit the coloring of $x$).

Suppose $p \in P$ fixes all three colorings of $T$. Then by induction on $i=0,1, \ldots, \ell-1$, $p$ must fix $T_0 \cup T_1 \cup \cdots \cup T_{\ell-1}$ pointwise. We also note that each $F_j$-coloring of the entire structure tree $T$ can be reconstructed from the corresponding coloring of the leaves. Moreover, if $p \in P$ fixes all three $F_j$-colorings of the leaves then $p$ fixes all three global colorings of $T$, which implies that $p$ fixes $T_0 \cup T_1 \cup \cdots \cup T_{\ell-1}$ pointwise. Therefore, as in the previous proposition, we can define six $2$-partitions of $\Delta$ with the property that if $p \in P$ stabilizes all six partitions then $p$ fixes $T_0 \cup T_1 \cup \cdots \cup T_{\ell-1}$ pointwise. In addition, by considering the $F_j$-colorings of $T_{\ell+1} \cup \cdots \cup T_{s}$, we note that if $p$ also fixes the large level $T_{\ell}$ pointwise 
then $p$ fixes every vertex in the entire tree, so $p=1$.

Consider the large level $T_{\ell}$. To simplify notation, set $a=a_{\ell-1}$ and $m=m_{\ell}$, so $|T_{\ell-1}|=a$, $|T_{\ell}|=ma$ and $m \geqs 7$. We will describe a bijection between the set of colorings of the vertices in $T_{\ell}$ with $2^{k/am}$ colors, and the set of $2$-colorings of $\Delta$. 

Let $z \in T_{s-1}$. 
The subset $\Delta(z) \subseteq \Delta$ has $m_{s}$ elements, so it has $2^{m_{s}}$ subsets. Each choice of subset $J \subseteq \Delta(z)$ yields a $2$-coloring of $\Delta(z)$: use $0$ to color the points in $J$, and $1$ for the points in $\Delta(z) \setminus J$. In this way, by choosing such subsets for all $\Delta(z)$ with $z \in T_{s-1}$ we obtain a $2$-coloring of $\Delta$. Conversely, if $T_{s-1}=\{z_1, \ldots, z_{a_{s-1}}\}$ then any $2$-coloring of $\Delta$ corresponds to a collection of subsets $\{J_1, \ldots, J_{a_{s-1}}\}$ with $J_i \subseteq \Delta(z_i)$.  We can now color each vertex of $T$ at level $T_{s-1}$ with one of $2^{m_{s}}$ colors; the color of $z \in T_{s-1}$ being uniquely determined by the given $2$-coloring of $\Delta(z)$. 

Next consider an element $y \in T_{s-2}$. Here $|\Delta(y)|=m_{s-1}$, and each child of $y$ can take one of $2^{m_{s}}$ colors. Therefore, there are $(2^{m_{s}})^{m_{s-1}} = 2^{m_{s}m_{s-1}}$ possible colorings of $\Delta(y)$, so we can color the vertex $y$ at level $T_{s-2}$ with one of $2^{m_{s}m_{s-1}}$ colors according to the coloring of $\Delta(y)$. Continuing in this way, working up through the levels in $T$, we see that we can color the $ma$ vertices at level $T_{\ell}$ with $2^{\prod_{i>\ell}m_{i}} = 2^{k/am}$ colors (note that $k=am$ if $\ell=s$). By construction, the coloring of $T_{\ell} \cup \cdots \cup T_{s}$ is uniquely determined by the $2$-coloring of the leaves. Conversely, note that any coloring of $T_{\ell}$ with $2^{k/am}$ colors yields a unique coloring of each lower level $T_{i}$ ($i>\ell$) with $2^{\prod_{j>i}m_j}$ colors. In particular, any such coloring of $T_{\ell}$ induces a unique $2$-coloring 
of $\Delta$.

Set $\chi = 2^{k/am}$. We need to determine how many colorings of
$T_{\ell}$ are needed (using $\chi$ colors) so that if $x \in
T_{\ell-1}$ and $p \in P_x$ (the setwise stabilizer) fixes each
coloring of $T_{\ell}$ then $p$ acts trivially on $\Delta(x)$. To do
this we mimic the proof of Proposition \ref{m vs 2}, working with 
base-$\chi$ expansions, rather than base-$2$. Let $j \in \{ 0,\ldots, m-1 \}$ and let
\begin{equation}\label{e:aij}
j  = a_0(j)\chi^0 + a_1(j)\chi^1 + \cdots + a_d(j)\chi^d
\end{equation}
be the base-$\chi$ expansion of $j$, where $d=\lfloor (am/k)\log m
\rfloor$. For each $i \in \{0, \ldots, d\}$ and each $f \in \{ 0,
\ldots, \chi-1 \}$ we set
$$\pi_{i,f} = \{ j \in \{ 0,\ldots, m-1 \} \mid a_i(j)=f\}$$
and 
\begin{equation}\label{e:pii}
\pi_i = (\pi_{i,0}, \pi_{i,1}, \ldots, \pi_{i,\chi-1}),
\end{equation}
so $\{\pi_0, \ldots, \pi_{d}\}$ is a collection of $\chi$-partitions
of $\{ 0,\ldots, m-1 \}$ if $m>\chi^d$, and if $m=\chi^d$ then $\{\pi_0,
\ldots, \pi_{d-1}\}$ is such a collection. To simplify the notation in the
rest of the proof, we define $\pi_d:=\pi_{d-1}$ in the latter case, so that in both cases we can refer to $d+1$ $\chi$-partitions $\{\pi_0, \ldots, \pi_{d}\}$. Note that if $\rho$ is a permutation of $\{0,\ldots, m-1\}$ that stabilizes each of these partitions then $a_i(j)=a_i(j^{\rho})$ for all $i,j$ (see \eqref{e:aij}), so  $\rho=1$.

Recall that $T_{\ell}$ is large, with linking factor $t$ dividing $a$. Set 
$$B_{\ell} = {\rm Soc}(S_{m} \wr S_a) = C_1 \times C_2 \times \cdots \times C_{a} = (A_m)^a$$
and recall that
\begin{equation}\label{e:dec}
P^{T_{\ell}} \cap B_{\ell} = D_1 \times D_2 \times \cdots \times D_{a/t}
\end{equation}
is a subdirect product of $B_{\ell}$, where each
\begin{equation}\label{e:dj}
D_i  = \{(z,z^{\a_{i,1}}, \ldots, z^{\a_{i,t-1}}) \mid z \in A_{m}\} \cong A_{m}
\end{equation}
is a diagonal subgroup of a direct product $\prod_{j \in I_{i}}C_j = (A_{m})^{t}$ with $\a_{i,j} \in {\rm Aut}(A_m) = S_m$ (since $m \geqs 7$). Here the $I_i$ form a partition of $[a]$, and to simplify notation we will assume that $I_{i} = \{(i-1)t+1, \ldots, (i-1)t+t\}$ for all $1 \leqs i \leqs a/t$.  

Write $T_{\ell-1} = \{y_0, \ldots, y_{a-1}\}$ and consider the partition 
\begin{equation}\label{e:pp}
T_{\ell} = \Delta(y_0) \cup \cdots \cup \Delta(y_{a-1})
\end{equation}
of $T_{\ell}$ into $a$ children sets of size $m$. 
Write $d+1 = bt+r$ with $0 \leqs r<t$ and set $e = \lceil (d+1)/t \rceil -1$.
We will define a collection $\{\s_0, \ldots, \s_e\}$ of $\chi$-colorings of
$T_{\ell}$ in terms of the partition of $T_{\ell}$ in \eqref{e:pp},
and the structure of $P^{T_{\ell}} \cap B_{\ell}$ presented in
\eqref{e:dec} and \eqref{e:dj}. 

For $j \in \{0, \ldots, e-1\}$ we define a $\chi$-coloring $\s_j$ of $T_{\ell}$ as follows. Fix $i \in \{0,\ldots, a-1\}$ and write $i = qt+v$ with $0 \leqs v<t$. We may arbitrarily identify $\Delta(y_{qt})$ with the set of integers $\{0, \ldots, m-1\}$, and so the $\chi$-partition $\pi_{jt+v}$ defined above (see \eqref{e:pii}) corresponds to a $\chi$-coloring of $\Delta(y_{qt})$. In \eqref{e:dj}, $\a_{q+1,v} \in {\rm Aut}(A_m) = S_m$ defines a bijection between $\Delta(y_{qt})$ and $\Delta(y_i)$ (we set $\a_{k,0}=1$ for all $1 \leqs k \leqs a/t$), so we can view $(\pi_{jt+v})^{\a_{q+1,v}}$ as a $\chi$-partition, and thus a $\chi$-coloring, of $\Delta(y_i)$. This defines a $\chi$-coloring of $\Delta(y_i)$, and we repeat the process for all $i \in \{0, \ldots, a-1\}$ to obtain a $\chi$-coloring of $T_{\ell}$, denoted by $\s_j$.

For example, the colorings $\s_0$ and $\s_1$ are defined as follows:

\vspace{-3mm}

$$\begin{array}{c|clll|clll|l}
& \Delta(y_0) & \Delta(y_1) & \ldots & \Delta(y_{t-1}) & \Delta(y_{t}) & \Delta(y_{t+1}) & \ldots & \Delta(y_{2t-1}) & \ldots \\ \hline
\s_0 & \pi_0 & (\pi_1)^{\a_{1,1}} & \ldots & (\pi_{t-1})^{\a_{1,t-1}} & \pi_0 & (\pi_1)^{\a_{2,1}} & \ldots & (\pi_{t-1})^{\a_{2,t-1}} & \ldots \\
\s_1 & \pi_t & (\pi_{t+1})^{\a_{1,1}} & \ldots & (\pi_{2t-1})^{\a_{1,t-1}} & \pi_t & (\pi_{t+1})^{\a_{2,1}} & \ldots & (\pi_{2t-1})^{\a_{2,t-1}} & \ldots 
\end{array}$$

If $r=0$ then $\s_e$ is defined by coloring $\Delta(y_i)$ with $(\pi_{et+v})^{\a_{q+1,v}}$ as before. However, if $r>0$ then we define $\s_e$ by 
coloring $\Delta(y_i)$ with $(\pi_{et+v})^{\a_{q+1,v}}$ if $v<r$, and with $(\pi_{et+r-1})^{\a_{q+1,v}}$ if $v \geqs r$ (note that $d= et+r-1$ if $r>0$).

\vs

\noindent \textbf{Claim 1.} \emph{Suppose $x \in T_{\ell-1}$ and $p \in P_x$ fixes each of the colorings $\s_0, \ldots, \s_e$ of $T_{\ell}$. Then $p$ fixes every vertex in $\Delta(x)$.}

\begin{proof}[Proof of claim]
To see this, first assume $x=y_0$ and let $i \in [t-1]$. Given the structure of $P^{T_{\ell}}$ in 
\eqref{e:dec}, it follows that $p$ also fixes $y_i$ setwise. More precisely, if $z \in {\rm Sym}(\Delta(y_0))$ denotes the action of $p$ on $\Delta(y_0)$ then the description of the diagonal subgroup $D_1$ in \eqref{e:dj} indicates that the action of $p$ on $\Delta(y_i)$ is given by $z^{\a_{1,i}}$.
We need to show that $z$ is trivial.

By definition of $\s_0$, we immediately deduce that $z$ fixes the $\chi$-coloring $\pi_0$ of $\Delta(y_0)$. Moreover, we see that $z^{\a_{1,i}} \in {\rm Sym}(\Delta(y_i))$ fixes the $\chi$-coloring $(\pi_i)^{\a_{1,i}}$ of $\Delta(y_i)$. In other words, if $p$ fixes the coloring $\s_0$ of $T_{\ell}$ then $z$ fixes the $\chi$-colorings $\pi_0, \pi_{1}, \ldots, \pi_{t-1}$ of $\Delta(y_0)$. 

Similarly, by considering $\s_1$, we see that $z$ also
fixes the $\chi$-colorings $\pi_t, \pi_{t+1}, \ldots, \pi_{2t-1}$ of $\Delta(y_0)$. Continuing in this way, we deduce that $z$ must fix all of the colorings $\{\pi_0,
\ldots, \pi_d\}$ of $\Delta(y_0)$, whence $a_i(j)=a_i(j^z)$ for all $i$
and all $j \in \Delta(y_0) $ (see \eqref{e:aij}) and thus $z$ is trivial, as required. 

The same argument applies if $x=y_{qt}$ for any
$q \in \{ 1,\ldots, a/t-1\}$. Finally, if $x=y_{qt+v}$ with $v>0$ then the claim follows from the fact that the actions of $p$ on $\Delta(y_{qt})$ and
$\Delta(y_{qt+v})$ are permutation isomorphic.
\end{proof}

As previously noted, every $\chi$-coloring of $T_{\ell}$ corresponds to a unique $2$-coloring of $\Delta$, so from the $\chi$-colorings $\s_0, \ldots, \s_e$ we obtain $e+1$ $2$-colorings of $\Delta$. Also, recall that we have defined an additional six $2$-colorings of $\Delta$, which arise from the three global $F_j$-colorings of $T$ defined at the beginning of the proof.

\vs

\noindent \textbf{Claim 2.} \emph{Suppose $p \in P$ fixes all $e+7$ $2$-colorings of $\Delta$. Then $p=1$.}

\begin{proof}[Proof of claim]
In view of our earlier remarks, in order to justify this claim it
suffices to show that if $p$ fixes each of the original six
$2$-colorings of $\Delta$, and $p$ also fixes the $2$-coloring $\tau$ of
$\Delta$ corresponding to any one of the above $\chi$-colorings $\s \in \{\s_0, \ldots, \s_e\}$ at level $T_{\ell}$, then $p$ must fix the coloring $\s$ of level $T_{\ell}$. (Recall that we have already noted that such an element $p$ fixes $T_0 \cup \cdots \cup T_{\ell-1}$ pointwise.) This is clear if $\ell=s$, so let us assume $\ell<s$. We proceed by induction on $i=s,s-1, \ldots, \ell$. 

For $i \geqs \ell$, let $\s(i)$ denote the unique coloring of $T_{i}$ induced
from the $\chi$-coloring $\s$ of level $T_{\ell}$. Suppose $p \in P$
fixes the original six $2$-colorings of $\Delta$, and also the
$2$-coloring $\tau = \s(s)$ of $\Delta$. We claim that $p$ fixes the 
$2^{m_s}$-coloring $\s(s-1)$ of $T_{s-1}$. Seeking a contradiction,
suppose $p$ does not fix this coloring, say $u,v \in T_{s-1}$ have
different colors, and $p$ maps $u$ to $v$. For this to happen, there
must be an element $q \in P(u)$ (the primitive group on $\Delta(u)$
induced by $P_u$) that sends the subset of $\Delta(u)$ corresponding
to the color of $u$ to the subset corresponding to the color of
$v$. Moreover, $q$ must fix the original three $F_j$-colorings of
$\Delta(u)$ (since $p$ fixes the $F_j$-colorings of $\Delta$). But by construction, the identity is the only element in $P(u)$ that fixes the three $F_j$-colorings of $\Delta(u)$. This is a contradiction, hence $p$ fixes the coloring $\s(s-1)$ of $T_{s-1}$. In the same way, we see that $p$ fixes the $2^{m_sm_{s-1}}$-coloring $\s(s-2)$ of $T_{s-2}$, and so on. In particular, working our way up the levels of $T$, we deduce that $p$ fixes the $\chi$-coloring $\s(\ell)=\s$ of $T_{\ell}$. The claim follows.
\end{proof}

In view of Claim 2, we have now found $e+7$ $2$-partitions of $\Delta$ with the desired trivial intersection property. Now
$$e+7 = \left\lceil \frac{d+1}{t} \right\rceil+6 \leqs \frac{am\log m}{kt}+8$$
and by considering the permutation group $P^{T_{\ell}}$ induced by $P$ on $T_{\ell}$ we deduce that
$$|P| \geqs |A_m|^{\frac{a}{t}} \geqs m^{\frac{cam}{t}}$$
for some absolute constant $c$. Therefore, 
$$e+7 \leqs c_1+c_2\frac{\log |P|}{k}$$
for some absolute constants $c_1$ and $c_2$. This completes the proof of Proposition \ref{p:2}.
\end{proof}

\begin{prop}\label{p:3}
The conclusion to Theorem \ref{t:main} holds if $P$ is imprimitive.
\end{prop}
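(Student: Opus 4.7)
The plan is to generalize the argument of Proposition \ref{p:2} to allow an arbitrary number of large levels $T_{\ell_1}<\cdots<T_{\ell_r}$ in the structure tree $T$ attached to the imprimitive transitive group $P$.

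First I would define three global $F_j$-colorings of $T$, using Corollary \ref{c:dolfi} at each non-large internal vertex and declaring that at every large level the children inherit their parent's color. Via Proposition \ref{m vs 2}, these three colorings contribute $6$ $2$-partitions of $\Delta$, and they take care of all the non-large levels in the same way as in Propositions \ref{p:1} and \ref{p:2}. Next, for each large level $T_{\ell_i}$ I would introduce a family of $\chi_i$-colorings with $\chi_i=2^{k/a_{\ell_i}}$, built exactly as in Proposition \ref{p:2}: precisely $e_i+1=\lceil(d_i+1)/t_{\ell_i}\rceil$ colorings $\sigma_0^{(i)},\ldots,\sigma_{e_i}^{(i)}$ of $T_{\ell_i}$, where $d_i=\lfloor(a_{\ell_i}/k)\log m_{\ell_i}\rfloor$ and $t_{\ell_i}$ is the linking factor at that level. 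Each such $\chi_i$-coloring corresponds canonically to a $2$-partition of $\Delta$.

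Triviality of the joint stabilizer would follow by induction on $i$. If $p\in P$ stabilizes every $2$-partition above, then the $F_j$-colorings force $p$ to fix $T_0\cup\cdots\cup T_{\ell_1-1}$ pointwise (as in Proposition \ref{p:1}); the $\sigma^{(1)}$-colorings together with the diagonal structure in \eqref{e:dec}--\eqref{e:dj} then force $p$ to fix $T_{\ell_1}$ pointwise (as in Claim~1 of Proposition \ref{p:2}); iterating between consecutive large levels, and applying the $F_j$-colorings on the segment below $T_{\ell_r}$, one concludes $p=1$.

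The main obstacle is the counting. Applying the per-level estimate $e_i+1\leqs a_{\ell_i}\log m_{\ell_i}/(k\,t_{\ell_i})+1$ from the end of the proof of Proposition \ref{p:2} and summing yields
$$6+\sum_{i=1}^{r}(e_i+1)\ \leqs\ 6+r+\frac{1}{k}\sum_{i=1}^{r}\frac{a_{\ell_i}\log m_{\ell_i}}{t_{\ell_i}}.$$
Since $P$ contains the section $\prod_i(A_{m_{\ell_i}})^{a_{\ell_i-1}/t_{\ell_i}}$, the final sum is $O(\log|P|)$, contributing $O(\log|P|/k)$ as required. The delicate step is to absorb the additive $+r$ into $O(1+\log|P|/k)$ when there are many large levels but $\log|P|/k$ is small: for consecutive large levels with $d_i=0$ and maximal linking $t_{\ell_i}=a_{\ell_i-1}$, each individual $\sigma_0^{(i)}$ carries very little information, so one instead bundles such levels together and shows, adapting the combinatorial analysis from the proof of \cite[Theorem 4.1]{GSS}, that a single cleverly chosen $2$-partition of $\Delta$ can simultaneously play the role of $\sigma_0^{(i)}$ at a whole group of such levels. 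This bundling argument is the technical heart of the proof, and once it is in place the total count is $O(1+\log|P|/k)$ as desired.
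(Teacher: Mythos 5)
Your construction --- three global $F_j$-colorings of the structure tree (with large levels inheriting their parent's color), plus a family of $\chi_i$-colorings at each large level built from base-$\chi_i$ expansions exactly as in Proposition~\ref{p:2} --- is the same construction the paper uses, and your two-stage verification that the joint stabilizer is trivial (propagate the $F_j$-colorings down the tree, interleaved with the diagonal structure \eqref{e:dec}--\eqref{e:dj} at each large level) is also the paper's argument for condition (ii). Structurally, then, your proposal and the paper's proof coincide.

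The trouble is in your final paragraph. You correctly note that the resulting count is at least $6+r$, where $r$ is the number of large levels (each $\lceil(d_i+1)/t_{\ell_i}\rceil\geqs 1$), and that the inequality $\log|P|/k \geqs c\sum_i t_i^{-1}\bigl(\prod_{j>\ell_i}m_j^{-1}\bigr)\log m_{\ell_i}$ only controls $\sum_i d_i/t_{\ell_i}$, not the additive $+r$. This is a genuine issue: take $P\cong S_7^n$ acting coordinatewise on $[7]^n$, so $k=7^n$, every level of the structure tree is large with $m_{\ell_i}=7$, linking factor $t_{\ell_i}=a_{\ell_i-1}$, $d_i=0$ for $i<n$ and $e_i=1$ for all $i$; then the construction yields $6+n$ partitions while $\log|P|/k\to 0$, so without a further idea the bound $O(1+\log|P|/k)$ is not reached. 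You propose to fix this by ``bundling'' consecutive large levels so that one $2$-partition does the work of several $\sigma_0^{(i)}$'s, and you call that the technical heart of the proof --- but you do not actually carry it out, so the argument is incomplete precisely at the point that matters. (For the record, the paper's own proof of condition (i) also stops after deriving the displayed lower bound on $\log|P|/k$ and simply declares (i) established, without explaining how the additive $+n$ is absorbed; so you have identified a real subtlety, but you have not resolved it either, and resolving it is what you would need to do to complete the proof.)
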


\begin{proof}
We may assume that $T$ has at least two large levels. Set $\mathcal{L} = \{i \mid \mbox{$T_i$ is large}\} = \{\ell_1, \ldots, \ell_n\}$, where $\ell_j<\ell_{j+1}$ for all $j$. Let $t_{i}$ be the linking factor of $T_{\ell_i}$. 

As before, we start by defining three global colorings of $T$, denoted $F_j:T \to \mathbb{F}_{3}$, $1 \leqs j \leqs 3$. These are  defined inductively as in \eqref{e:fj} (starting with $F_j(T_0)=0$), except that if $x \in T_{\ell_i-1}$ (for any $1 \leqs i \leqs n$) then we set $\Delta_j(x) = \Delta(x)$ for all $j$. In the usual way, the three $F_j$-colorings of $T$ yield six $2$-colorings of $\Delta$. Moreover, if $p \in P$ fixes all six $2$-colorings of $\Delta$ then $p$ fixes $T_0 \cup T_1 \cup \cdots \cup T_{\ell_1-1}$ pointwise.

For each large level $T_{\ell_i}$ in $T$ we define a collection of $e_{i}$ colorings of $T_{\ell_i}$ in $\chi_i=2^{\prod_{j>\ell_i}m_j}$ colors. These colorings are defined exactly as in the proof of the previous proposition, so that 
$$e_{i} = \left\lceil \frac{d_{i}+1}{t_i}\right\rceil \mbox{ and } d_{i} = \left\lfloor \left(\prod_{j>\ell_i}m_j^{-1}\right) \log m_{\ell_i} \right\rfloor$$
and each of these $\chi_i$-colorings yields a specific $2$-coloring of $\Delta$. 
In particular, if $x \in T_{\ell_i-1}$ and $p \in P_x$ fixes all $e_{i}$ colorings of $T_{\ell_i}$ then $p$ acts trivially on $\Delta(x)$ (see Claim 1 in the proof of Proposition \ref{p:2}). In this way, we end up with a collection of
$$\a=6+\sum_{i=1}^{n}e_{i}$$
$2$-colorings (or $2$-partitions) of $\Delta$. 

To complete the proof of the proposition, it is sufficient to show that
\begin{itemize}\addtolength{\itemsep}{0.5\baselineskip}
 \item[(i)] $\a \leqs c_1+c_2\frac{\log |P|}{k}$ for some absolute constants $c_1,c_2$; and
\item[(ii)] if $p \in P$ fixes all $\a$ $2$-colorings of $\Delta$ then $p=1$. 
\end{itemize}

First consider (i). Just by considering the large levels in $T$ we calculate that 
$$|P| \geqs \prod_{i=1}^n |A_{m_{\ell_i}}|^{t_i^{-1}a_{\ell_i-1}} = \prod_{i=1}^n |A_{m_{\ell_i}}|^{t_i^{-1}\prod_{j<\ell_i}m_j}$$
and thus
$$\log |P| \geqs c \sum_{i=1}^n t_i^{-1}\left(\prod_{j=1}^{\ell_i}m_j\right)\log m_{\ell_i}$$
for some absolute constant $c$. In particular, since $k=\prod_{i=1}^{s}m_i$ it follows that 
$$\frac{\log |P|}{k} \geqs c \sum_{i=1}^{n} t_i^{-1}\left(\prod_{j>\ell_i}m_j^{-1}\right)\log m_{\ell_i}.$$
This establishes (i). 

Finally, let us turn to (ii). Suppose $p \in P$ fixes all $\a$ $2$-colorings of $\Delta$. We need to show that $p=1$, and to do this we will use induction.
First consider the 
$e_{n}$ $2$-colorings of $\Delta$ corresponding to the $\chi_n$-colorings of the lowest large level $T_{\ell_n}$. Since $p$ fixes the original six $2$-colorings of $\Delta$, we deduce that $p$ fixes all the $e_{n}$ colorings of level $T_{\ell_n}$ (we repeat the argument given in the proof of Claim 2 in the proof of  Proposition \ref{p:2}). Therefore, if $x \in T_{\ell_n-1}$ then each $p \in P_x$ fixes every vertex in $\Delta(x)$. 

Next consider the $e_{n-1}$ $2$-colorings of $\Delta$ induced from the $\chi_{n-1}$-colorings of the vertices at level $T_{\ell_{n-1}}$. Since we are assuming that $p$ fixes the $6+e_{n}$ $2$-colorings of $\Delta$ that arise from the $F_j$-colorings of $T$ and the $\chi_n$-colorings of $T_{\ell_n}$, we deduce that $p$ fixes each of the $e_{n-1}$ colorings of $T_{\ell_{n-1}}$. In particular, if $x \in T_{\ell_{n-1}-1}$ then each $p \in P_x$ fixes every vertex in $\Delta(x)$. Continuing in this way, we deduce that if $p \in P_x$ for any $x \in T_{\ell_i-1}$ then $p$ fixes every vertex in $\Delta(x)$. Of course, the same property holds if $x \in T_{i-1}$ and $T_i$ is a non-large level (that is, each $p \in P_x$ acts trivially on $\Delta(x)$) because $p$ fixes the original three $F_j$-colorings of $T_{i}$. 

We have already noted that $p$ fixes $T_0 \cup \cdots \cup T_{\ell_1-1}$ pointwise. Working inductively, this time starting at the root of $T$ and working down level-by-level, we deduce that $p$ fixes all vertices in $T_{\ell_1}$. Therefore, by the argument above, $p$ fixes all vertices at level $T_{\ell_1+1}$, and so on. Continuing in this way, we deduce that $p$ fixes all the vertices in $T_s$, that is, $p$ fixes $\Delta$ pointwise, so $p=1$ as required. This justifies (ii), and the proof of the proposition is complete.
\end{proof}

This completes the proof of Theorem \ref{t:main}.

\subsection{Pyber's conjecture}\label{ss:pybpt}

In this section we will apply Theorem \ref{t:main} to establish Pyber's conjecture for product-type groups. 

\begin{thm}\label{t:pybppa}
Pyber's conjecture holds if $G$ is a product-type group.
\end{thm}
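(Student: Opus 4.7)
The plan is to build a base for $G$ of size $O(\log|G|/\log n)$ by combining two ingredients: one handling the ``local'' factor $H$ via the previously-proved cases of Pyber's conjecture (Theorems \ref{t:as} and \ref{t:d}), and one handling the permutation factor $P$ via Theorem \ref{t:main}. First I would pick a base $\{\delta_{1},\ldots,\delta_{b}\}$ for $H$ on $\Gamma$ with $b = b(H)$ and form the $b$ constant tuples $\omega_{j} = (\delta_{j},\ldots,\delta_{j}) \in \Omega$. By the product-action formula \eqref{e:prodd}, any $g = (h_{1},\ldots,h_{k})p^{-1} \in G$ fixing all $\omega_{j}$ must have each $h_{i}$ fixing every $\delta_{j}$, forcing $h_{i}=1$ and reducing the task to killing the pure coordinate-permutation action of $P$.

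Theorem \ref{t:main} then supplies a set of $m = O(1 + \log|P|/k)$ $2$-partitions of $[k]$ with trivial common stabilizer in $P$. Each $2$-partition $(Y, [k]\setminus Y)$ is encoded as a tuple $\xi \in \Omega$ via $\xi(i) = \alpha$ for $i \in Y$ and $\xi(i) = \beta$ otherwise, where $\alpha,\beta\in\Gamma$ are two fixed distinct points. When $|\Gamma|$ is large I would save a factor of $\lfloor\log|\Gamma|\rfloor$ by bundling that many $2$-partitions into a single $|\Gamma|$-valued tuple via binary encoding (the inverse of the construction in Proposition \ref{m vs 2}). The resulting base has size
$$b(G) \;\leq\; b(H) + \left\lceil\frac{m}{\lfloor\log|\Gamma|\rfloor}\right\rceil \;=\; O\left(\frac{\log|H|}{\log|\Gamma|} + 1 + \frac{\log|P|}{k\log|\Gamma|}\right).$$
Multiplying by $\log n = k\log|\Gamma|$ and using the lower bound $\log|G| \geq (k-1)\log|T| + \log|H| + \log|P|$ (from Scott's lemma applied to the subdirect product $G\cap H^{k}$ and the surjection $G\twoheadrightarrow P$) should then close the estimate with an absolute constant.

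The main obstacle will be the diagonal sub-case in which $H$ has socle $T_{0}^{k_{0}}$ with $k_{0}\gg|T_{0}|$ and the induced permutation group on the $k_{0}$ socle factors contains $A_{k_{0}}$: here $\log|H|/\log|T|$ can be unboundedly large, so the bound $b(H) = O(\log|H|/\log|\Gamma|)$ is too weak once $k$ is also large. The remedy is to refine the construction by splitting the task of killing $G\cap H^{k}$ into killing the socle $T^{k}$ (for which $b(T,\Gamma) = O(1)$ suffices) and killing the ``diagonal'' part $\operatorname{diag}(H/T)$ by spreading a base for $H/T$ on $\Gamma$ across the $k$ coordinates of a few non-constant tuples, which requires only $\lceil b(H/T,\Gamma)/k\rceil$ such tuples in total. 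Combined with Fawcett's sharp bound $b(H) \leq \lceil\log k_{0}/\log|T_{0}|\rceil + O(1)$ from Theorem \ref{t:d}(iii), this matches the $\log(H/T)$ contribution in the lower bound for $\log|G|$ and yields Pyber's conjecture with an absolute constant in this case.
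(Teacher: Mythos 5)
Your overall framework coincides with the paper's: a base for $H$ provides constant tuples to kill the $H^k$ direction, Theorem~\ref{t:main} provides $2$-partitions of $[k]$ to kill the permutation direction, and these $2$-partitions are bundled $\lfloor\log|\Gamma|\rfloor$ at a time into points of $\Omega$ (this is exactly the paper's Lemma~\ref{l:PP}). You also identify precisely the one place where this naive count fails: diagonal $H$ with socle $S^{\ell}$, $\ell\geqs 7$, $A_{\ell}\leqs Q$, since there $b(H)\cdot\log|\Gamma|\approx(\ell-1)\log\ell$ while the crude lower bound $\log|G|\geqs(k-1)\log|T|+\log|H|+\log|P|$ only supplies one factor of $\log(\ell!)$.

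The gap is in the remedy. You assert that after killing $T^k$ with $O(1)$ constant tuples, what remains of $G\cap H^k$ modulo $T^k$ is the full diagonal $\operatorname{diag}(H/T)$, so that $\lceil b/k\rceil$ non-constant tuples suffice. That is false in general. After the Kov\'acs normalization the image $(G\cap H^k)T^k/T^k$ is only a subdirect product of $(H/T)^k$, and the $A_{\ell}$-part of it is, by Scott's lemma applied to $X\cap{\rm Soc}(Q\wr P)=(A_{\ell})^k$, a product $D_1\times\cdots\times D_{k/t}$ of diagonals across blocks of size $t$, where the linking factor $t$ can be any divisor of $k$ (the paper's \eqref{e:decc}--\eqref{e:didef}). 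Your $\lceil b/k\rceil$ is the count for $t=k$ only; for $1<t<k$ a non-constant tuple can impose at most $t$ independent base-point constraints on each of the $k/t$ free $A_{\ell}$-coordinates, so one needs $\lceil b/t\rceil$ tuples, not $\lceil b/k\rceil$. Moreover your lower bound on $|G|$ does not scale with $t$: to compensate for $\lceil b/t\rceil$ tuples one needs the extra contribution $|A_{\ell}|^{k/t}$, i.e. $|G|\geqs|T|^k|P||A_{\ell}|^{k/t}$ as in \eqref{e:gbd}, whereas your bound only captures a single factor $|H/T|$. The two sides --- the number of tuples and the size of $|G|$ --- must both be tracked as functions of $t$, and that matching is exactly what the paper does in Cases~2.1 and 2.2 of Proposition~\ref{p:pybd} (also adding Fawcett's three points $\gamma_1,\gamma_2,\gamma_3$ to pass from $G$ to the top group $X\leqs Q\wr P$, a reduction your sketch elides). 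Without introducing and exploiting the linking factor, the argument does not close for intermediate $t$.
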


\begin{proof}
First, let us recall the general set-up for product-type groups. We have $\Omega = \Gamma^k$ for some
set $\Gamma$ and integer $k \geqs 2$, and referring to the notation in Table \ref{t:onan}, there exists a primitive group $H \leqs {\rm Sym}(\Gamma)$ of type {\rm II} (almost simple) or type {\rm
  III(a)} (diagonal-type) with socle $T$, and a transitive subgroup $P
\leqs {\rm Sym}(\Delta)$ (with $\Delta=[k]$) induced by the conjugation action of $G$ on the $k$ factors of $T^k$, such that 
$${\rm Soc}(G) = T^k \leqs G \leqs H \wr P.$$
(Here $p \in P$ if and only if $(h_1, \ldots, h_k)p \in G$ for some $h_i \in H$.)
Note that 
\begin{equation}\label{e:ordd}
|G| \geqs |T|^k|P|.
\end{equation}
Write $\Omega = \Gamma_1 \times \cdots \times \Gamma_k$ where $\Gamma_i=\Gamma$ for each $i$. The action of $G$ on $\Omega$ is described in \eqref{e:prodd}.

\begin{lem}\label{l:kov}
Without loss of generality, we may assume that $G$ induces $H$ on each of the $k$ factors $\Gamma_i$ of $\Omega$.
\end{lem}

\begin{proof}
Set 
$G_1 = \{(h_1, \ldots, h_k)p \in G \mid 1^{p}=1\}$
and let $R \leqs H \leqs {\rm Sym}(\Gamma_1)$ be the permutation group induced by $G_1$ on $\Gamma_1$. By a result of Kov\'{a}cs \cite[(2.2)]{Kovacs},
we may replace $G$ by a conjugate $G^x$ for some $x \in \prod_{i=1}^k{\rm Sym}(\Gamma_i)<{\rm Sym}(\Omega)$ so that $G \leqs R \wr P$ and $G$ induces $R$ on each factor $\Gamma_i$ of $\Omega$. In particular, the induced group $R \leqs {\rm Sym}(\Gamma_i)$ is primitive of type {\rm II} or {\rm III(a)}, according to the type of $H$. Therefore, we may as well assume that $R=H$, and the result follows. 
\end{proof}

By Theorem \ref{t:main}, there exists a collection $\{\pi_1, \ldots, \pi_a\}$ of 
$2$-partitions of $\Delta = [k]$ such that the intersection in $P$ of the stabilizers of these partitions is trivial, and the size of this collection satisfies the bound
\begin{equation}\label{e:aa}
a \leqs c_1 + c_2\frac{\log |P|}{k}
\end{equation} 
for some absolute constants $c_1,c_2$ (which are independent of $P$ and $k$). Set 
\begin{equation}\label{e:rdef}
r = \lfloor \log |\Gamma| \rfloor.
\end{equation}

\begin{lem}\label{l:PP}
Let $a$ and $r$ be the integers in \eqref{e:aa} and \eqref{e:rdef}. Then there exists a collection of points $\{\a_1, \ldots, \a_{\lceil a/r \rceil}\}$ in $\Omega$ with the property that an element $g=(1,\ldots, 1)p \in G$ fixes each $\a_i$ if and only if $p=1$.
\end{lem}

\begin{proof}
Write $a=qr+c$ with $0 \leqs c<r$, and 
partition the set $\{\pi_1, \ldots, \pi_a\}$ into $q$ subsets of size $r$ (plus an additional subset of size $c$ if $c>0$). To simplify the notation, suppose $S=\{\pi_1, \ldots, \pi_r\}$ is one of these subsets. By taking the common refinement of the $2$-partitions $\pi_i$ in $S$ we obtain an $s$-partition $(\s_1, \ldots, \s_s)$ of $[k]$ such that $s \leqs 2^r \leqs |\Gamma|$ and each subset $\s_i$ is contained in one of the two parts of each partition in $S$. Choose distinct elements $\gamma_1, \ldots, \gamma_s \in \Gamma$ (we can do this since $s \leqs |\Gamma|$) and define $\a \in \Omega$ so that all the coordinates in $\a$ corresponding to points in $\s_i$ are equal to $\gamma_i$. Note that if $g=(1,\ldots, 1)p \in G$ fixes $\a$ then $p$ stabilizes all of the $2$-partitions in $S$. 

Continuing in this way, we construct a collection of points 
$\{\a_1, \ldots, \a_{\lceil a/r \rceil}\}$
in $\Omega$ with the property that if $g = (1, \ldots, 1)p \in G$ fixes each $\a_i$ then $p$ fixes each of the $2$-partitions in $\{\pi_1, \ldots, \pi_a\}$. But only the trivial permutation fixes each $\pi_i$, so $p=1$ as required.
\end{proof}

To complete the proof of Theorem \ref{t:pybppa}, we now distinguish two cases, according to whether or not $H$ is almost simple or diagonal. 

\begin{prop}\label{p:pybs}
Pyber's conjecture holds if $H \leqs {\rm Sym}(\Gamma)$ is almost
simple.
\end{prop}

\begin{proof}
Here ${\rm Soc}(H)=T$ is a nonabelian simple group and $|H| \leqs |{\rm Aut}(T)| \leqs |T|^2$, so \eqref{e:ordd} implies that
\begin{equation}\label{e:asbb}
|G| \geqs |T|^{k}|P| \geqs |H|^{\frac{k}{2}}|P|.
\end{equation}
Let $b=b(H)$ be the base size of $H \leqs {\rm Sym}(\Gamma)$. By Theorem \ref{t:as} we have 
\begin{equation}\label{e:bb}
b \leqs c_3\frac{\log |H|}{\log |\Gamma|}
\end{equation}
for some absolute constant $c_3$. Let $\{\gamma_1, \ldots, \gamma_{b}\}$ be a base for $H$ and set $\a_{i}' =(\gamma_i, \ldots, \gamma_i) \in \Omega$ for all $1 \leqs i \leqs b$. Suppose $g = (h_1, \ldots, h_k)p^{-1} \in G$ fixes each $\a_i'$. Since
$$(\gamma_i, \ldots, \gamma_i)^{g} = (\gamma_i^{h_{1^{p}}}, \ldots, \gamma_i^{h_{k^{p}}})$$
it follows that each $h_j$ fixes $\gamma_i$ for all $1 \leqs i \leqs b$, so $h_j=1$ and thus $g=(1,\ldots, 1)p^{-1}$. 

By applying Lemma \ref{l:PP}, we deduce that  
\begin{equation}\label{e:base}
\mathcal{B}=\{\a_1, \ldots, \a_{\lceil a/r \rceil}\} \cup \{\a_1', \ldots, \a_{b}'\}
\end{equation}
is a base for $G$, where $r=\lfloor \log |\Gamma| \rfloor$ (see \eqref{e:rdef}) and $a$ is an integer that satisfies the upper bound in \eqref{e:aa}. In view of \eqref{e:aa} and \eqref{e:bb}, it follows that there exist absolute constants $c_i$ such that
\begin{align*}
b(G) \leqs \lceil a/r \rceil+b & \leqs \left\lceil c_1\frac{1}{\lfloor \log |\Gamma| \rfloor} + c_2\frac{\log |P|}{k\lfloor \log |\Gamma| \rfloor} \right\rceil+c_3\frac{\log |H|}{\log |\Gamma|}\\
& \leqs c_4\frac{\log |P|}{\log |\Omega|} + c_5\frac{\log |H^k|}{\log |\Omega|} \\
& \leqs c_6\frac{\log |G|}{\log |\Omega|}
\end{align*}
as required (the final inequality follows from \eqref{e:asbb}).
\end{proof}

\begin{prop}\label{p:pybd}
Pyber's conjecture holds if $H \leqs {\rm Sym}(\Gamma)$ is diagonal.
\end{prop}

\begin{proof}
Here ${\rm Soc}(H) = T = S^{\ell}$, where $S$ is a nonabelian simple group, $\ell \geqs 2$ and 
$${\rm Soc}(H) = S^{\ell} \leqs H \leqs S^{\ell}.({\rm Out}(S) \times Q)$$
where $Q \leqs S_{\ell}$ is the permutation group induced from the conjugation action of $H$ on the $\ell$ factors of $S^{\ell}$. Since $H$ is primitive, either $Q \leqs S_{\ell}$ is primitive, or $\ell=2$ and $Q=1$.
Note that $|\Gamma| = |S|^{\ell-1}$ and
\begin{equation}\label{e:hordd}
|H| \leqs |S|^{\ell+1}|S_{\ell}|.
\end{equation}
There are two cases to consider.

\vs

\noindent \textbf{Case 1.} \emph{$\ell \leqs 6$ or $A_{\ell} \not\leqs Q$}

\vs

\noindent As in the proof of Proposition \ref{p:pybs}, the set $\mathcal{B}$ in \eqref{e:base} is a base for $G$, where $a$ and $r$ are given in \eqref{e:aa} and \eqref{e:rdef}, respectively, and $b=b(H)$. 
Since we are assuming that either $\ell \leqs 6$ or $A_{\ell} \not\leqs Q$, the main theorem of \cite{Fawcett} (see the proof of Theorem \ref{t:d}) implies that $b \leqs 4$. Therefore, using the bound in \eqref{e:ordd}, we deduce that 
$$b(G) \leqs \lceil a/r \rceil+4  \leqs \left\lceil c_1\frac{1}{\lfloor \log |\Gamma| \rfloor} + c_2\frac{\log |P|}{k\lfloor \log |\Gamma| \rfloor} \right\rceil + 4  \leqs c_3 \frac{\log |G|}{\log |\Omega|}$$
for some absolute constant $c_3$, as required.

\vs

\noindent \textbf{Case 2.} \emph{$\ell \geqs 7$, and $Q=A_{\ell}$ or $S_{\ell}$} 

\vs

\noindent By Lemma \ref{l:kov}, we may assume that $G$ induces $H$ on each of the $k$ factors $\Gamma_i=\Gamma$ in $\Omega=\Gamma^k$. In particular, if we write ${\rm Soc}(G) = T_1 \times \cdots \times T_k$ with $T_i=S^{\ell}$ for each $i$, then $G$ induces $Q$ on the set of $\ell$ simple factors in each factor $T_i$. Let 
\begin{equation}\label{e:X}
X \leqs Q \wr P \leqs S_{k\ell}
\end{equation} 
be the group induced by the conjugation action of $G$ on the $k\ell$ factors of ${\rm Soc}(G)=S^{k\ell}$. 

By the proof of  \cite[Proposition 3.8]{Fawcett}, there exist elements 
$\gamma_1,\gamma_2,\gamma_3 \in \Gamma$ such that the pointwise stabilizer in $H$ of these points is contained in $Q$. (In other words, if $(s_1,\ldots,s_{\ell})q \in H$ fixes each $\gamma_i$ then $s_j=1$ for all $j$.)
Set $\a_i'' = (\gamma_i, \ldots, \gamma_i) \in \Omega$ for $i=1,2,3$, and let $Y$ be the pointwise stabilizer in $G$ of the elements 
\begin{equation}\label{e:aldd}
\{\a_1'',\a_2'',\a_3''\}
\end{equation}
Then we may view $Y$ as a subgroup of $X$, where $X$ is defined in \eqref{e:X}. 

Since $A_{\ell} \leqs Q$ it follows that
$${\rm Soc}(Q \wr P) = C_1 \times C_2 \times \cdots \times C_k = (A_{\ell})^k$$
and by applying Lemma \ref{l:kov}, \cite[p.328, Lemma]{Scott} and the transitivity of $P \leqs S_k$ we deduce that 
\begin{equation}\label{e:decc}
X \cap {\rm Soc}(Q \wr P) = \prod_{i=1}^{k/t}D_i \cong (A_{\ell})^{k/t}
\end{equation}
is a subdirect product of ${\rm Soc}(Q \wr P)$ for some divisor $t$ of $k$, where each 
\begin{equation}\label{e:didef}
D_i  = \{(z,z^{\a_{i,1}}, \ldots, z^{\a_{i,t-1}}) \mid z \in A_{\ell}\} \cong A_{\ell}
\end{equation}
is a diagonal subgroup of a direct product $\prod_{j \in I_{i}}C_j = (A_{\ell})^{t}$. Here $\a_{i,j} \in {\rm Aut}(A_{\ell}) = S_{\ell}$ for all $i,j$ (recall that $\ell \geqs 7$), and the $I_i$ form a partition of $\Delta = [k]$ into subsets of size $t$. To simplify notation we will assume that $I_{i} = \{(i-1)t+1, \ldots, (i-1)t+t\}$ for all $1 \leqs i \leqs k/t$. Note that 
\begin{equation}\label{e:gbd}
|G| \geqs |T|^{k}|P||A_{\ell}|^{k/t}.
\end{equation}
We now consider two subcases, according to the value of $t$.

\vs

\noindent \textbf{Case 2.1.} $t=1$

\vs

\noindent As in the proof of Proposition \ref{p:pybs}, 
the set $\mathcal{B}$ in \eqref{e:base} is a base for $G$, where $b=b(H)$ satisfies the upper bound in \eqref{e:bb} for some absolute constant $c_3$ (see Theorem \ref{t:d}). Therefore 
$$b(G) \leqs \lceil a/r \rceil+b \leqs \left\lceil c_1\frac{1}{\lfloor \log |\Gamma| \rfloor} + c_2\frac{\log |P|}{k\lfloor \log |\Gamma| \rfloor} \right\rceil+c_3\frac{\log |H|}{\log |\Gamma|}$$
for some absolute constants $c_1,c_2,c_3$. Since $|H| \leqs |T|^2|Q|$ (see \eqref{e:hordd}) it follows that
$$b(G) \leqs c_4\frac{\log |P|}{\log |\Omega|} + c_5\frac{\log(|T|^k|A_{\ell}|^k)}{\log |\Omega|} \leqs c_6 \frac{\log |G|}{\log |\Omega|}$$
as required (the final inequality follows from \eqref{e:gbd}).

\vs

\noindent \textbf{Case 2.2.} $t>1$

\vs

\noindent By replacing $G$ by a suitable conjugate $G^y$ with $y \in \prod_{i=1}^{k}{\rm Sym}(\Gamma_i)<{\rm Sym}(\Omega)$,
we may assume that the automorphisms $\a_{i,j} \in {\rm Aut}(A_{\ell})$ appearing in \eqref{e:didef} are independent of $i$, so that    
\begin{equation}\label{e:dj3}
D_1  = D_2 = \cdots = D_{k/t} = \{(z,z^{\s_1}, \ldots, z^{\s_{t-1}}) \mid z \in A_{\ell}\} \cong A_{\ell}
\end{equation}
for some $\s_j \in {\rm Aut}(A_{\ell}) = S_{\ell}$.

Let $\{\gamma_0, \ldots, \gamma_{b-1}\} \subseteq \Gamma$ be a base for $H$, where $b=b(H)$, and write $b=qt+v$ with $0 \leqs v<t$. Set 
\begin{equation}\label{e:ee}
e=\lceil b/t \rceil.
\end{equation}
We will use the structure of $X \cap {\rm Soc}(Q \wr P)$ 
in \eqref{e:decc} and \eqref{e:dj3} above to construct a specific collection of elements $\{\a_{0}', \ldots, \a_{e-1}'\}$ in $\Omega$. 

For $j \in [k]$ write $j=q't+v'$ with $0 \leqs v'<t$. For $0 \leqs i <e-1$ we define the $j$-th coordinate of $\a_{i}'$ to be the element 
$(\gamma_{it+v'})^{\s_{v'}} \in \Gamma$, where $\s_{0}=1$.
Note that each $\gamma \in \Gamma$ corresponds to a coset of $\ell$-tuples of elements in $S$ by
the diagonal subgroup $D=\{(s,\ldots,s) \mid s \in S\}$ of $S^{\ell}$, so $\gamma^{\s} \in \Gamma$ is defined in
the natural way for all $\s \in S_{\ell}$. Explicitly, if we write $\gamma = (s_1, \ldots, s_{\ell})D$ then
$$\gamma^{\s} = (s_{1^{\s^{-1}}}, \ldots, s_{\ell^{\s^{-1}}})D.$$
For example, the elements $\a_{0}'$ and $\a_{1}'$ in $\Omega$ are defined as follows:
\begin{align*}
\a_{0}' & = \left(\gamma_0, (\gamma_1)^{\s_{1}}, \ldots, (\gamma_{t-1})^{\s_{t-1}}, \gamma_0, (\gamma_{1})^{\s_{1}}, \ldots, (\gamma_{t-1})^{\s_{t-1}}, \ldots\right) \\
\a_{1}' & = \left(\gamma_t, (\gamma_{t+1})^{\s_{1}}, \ldots, (\gamma_{2t-1})^{\s_{t-1}}, \gamma_t, (\gamma_{t+1})^{\s_{1}}, \ldots, (\gamma_{2t-1})^{\s_{t-1}}, \ldots\right)
\end{align*}
We define the $j$-th coordinate of $\a_{e-1}'$ to be 
$(\gamma_{(e-1)t+v'})^{\s_{v'}}$ if $v=0$, otherwise it is $(\gamma_{(e-1)t+v'})^{\s_{v'}}$ if $v' < v$, and 
$(\gamma_{(e-1)t+v-1})^{\s_{v'}}$ if $v' \geqs v$ (note that $(e-1)t+v-1=b-1$ if $v>0$). 

Set
\begin{equation}\label{e:base2}
\mathcal{B}'=\{\a_1, \ldots, \a_{\lceil a/r \rceil}\} \cup \{\a_{1}'',\a_{2}'',\a_{3}''\} \cup \{\a_{0}', \ldots, \a_{e-1}'\},
\end{equation}
where the $\a_i$ are the elements given in Lemma \ref{l:PP}, and the $\a_i''$ are defined in the discussion preceding \eqref{e:aldd}.

\vs

\noindent \textbf{Claim.} \emph{The set $\mathcal{B}'$ in \eqref{e:base2} is a base for $G$.}

\begin{proof}[Proof of claim]
In order to see this, it suffices to show that if $x=(q_1, \ldots, q_k)p \in X$ fixes each $\a_i'$ then $q_j=1$ for all $j$. Indeed, at the start of Case 2 we noted that the pointwise stabilizer in $G$ of the $\a_i''$ is contained in $X$, and Lemma \ref{l:PP} implies that there are no nontrivial elements in $G$ of the form $(1,\ldots, 1)p$ (with $p \in P$) that fix each $\a_i$.

In view of \eqref{e:decc} and \eqref{e:dj3}, we may write
$$x=\left(q_1,q_1^{\s_{1}}, \ldots, q_{1}^{\s_{t-1}}, q_2, q_2^{\s_{1}}, \ldots, q_{2}^{\s_{t-1}}, \ldots, q_{k/t},q_{k/t}^{\s_{1}}, \ldots, q_{k/t}^{\s_{t-1}}\right)p$$
for some $q_j \in S_{\ell}$. Here the element $p \in S_k$ must be compatible with the linking of the $A_{\ell}$ factors in $X \cap {\rm Soc}(Q \wr P)$. More precisely, since $x$ normalizes $X \cap {\rm Soc}(Q \wr P)$, we can write 
$$p^{-1} = (p_0, \ldots, p_{t-1}) \in \prod_{i=1}^{t}{\rm Sym}(\Delta_i)<{\rm Sym}(\Delta)$$
where $\Delta_i = \{i,t+i, \ldots, k-t+i\}$ and $\Delta = [k]$. 

If we view each $p_i \in {\rm Sym}(\Delta_i)$ as a permutation of $[k/t]=\{1, \ldots, k/t\}$ then by definition of the action of $G$ on $\Omega$ (see \eqref{e:prodd}) we have
\begin{align*}
(\a_0')^{x} = & \, \left(\left(\gamma_0\right)^{q_{1^{p_0}}},\left((\gamma_1)^{\s_1}\right)^{q_{1^{p_1}}^{\s_1}}, \ldots, \left((\gamma_{t-1}\right)^{\s_{t-1}})^{q_{1^{p_{t-1}}}^{\s_{t-1}}}, \ldots\right. \\
& \;\;\left. \ldots,  \left(\gamma_0\right)^{q_{(k/t)^{p_0}}},\left((\gamma_1\right)^{\s_1})^{q_{(k/t)^{p_1}}^{\s_1}}, \ldots, \left((\gamma_{t-1}\right)^{\s_{t-1}})^{q_{(k/t)^{p_{t-1}}}^{\s_{t-1}}}\right). 
\end{align*}

Suppose $x$ fixes $\a_{0}'$. By comparing the coordinates of $\a_0'$ and $(\a_0')^x$ corresponding to the points in $\Delta_1$ we deduce that
$$\gamma_0 = (\gamma_0)^{q_{i^{p_0}}}$$
for all $i \in [k/t]$, hence $q_j$ fixes $\gamma_0$ for all $1 \leqs j \leqs k/t$. Similarly, by comparing the coordinates in $\Delta_2$ we see that
$$(\gamma_1)^{\s_1} = \left((\gamma_1)^{\s_1}\right)^{q_{i^{p_1}}^{\s_1}}$$
for all $i \in [k/t]$. Therefore, each $q_j^{\s_1}$ fixes $(\gamma_1)^{\s_1}$ and by applying $\s_1^{-1}$ we deduce that $q_j$ fixes $\gamma_1$.
In general, if $x$ fixes $\a_{0}'$ then 
$$(\gamma_i)^{\s_i} = \left((\gamma_i)^{\s_i}\right)^{q_{j}^{\s_i}}$$
for all $0 \leqs i \leqs t-1$ and all $1 \leqs j \leqs k/t$ (where $\s_0 = 1$ as before), and we conclude that each $q_j$ fixes $\{\gamma_{0}, \ldots, \gamma_{t-1}\}$ pointwise.

By the same argument, if $x$ fixes $\a_{1}'$ then each $q_j$ fixes $\{\gamma_{t}, \ldots, \gamma_{2t-1}\}$ pointwise, and so on. In particular, if $x$ fixes all the $\a_{i}'$ (for $0 \leqs i \leqs e-1$) then each $q_j$ fixes every element in the set $\{\gamma_{0}, \ldots, \gamma_{b-1}\}$, but this is a base for $H$, so only the identity element in $H$ has this property. Therefore $q_j=1$ for all $j$. We conclude that the set 
$\mathcal{B}'$ in \eqref{e:base2} is a base for $G$.
\end{proof}

Let us now complete the proof of Proposition \ref{p:pybd}. Recall that $e=\lceil b/t \rceil$ (see \eqref{e:ee}), and $b=b(H)$ satisfies the upper bound in \eqref{e:bb}. In view of \eqref{e:aa}, \eqref{e:rdef} and \eqref{e:hordd} we deduce that there are absolute constants $c_i$ such that
\begin{align*}
b(G) \leqs \lceil a/r \rceil +\lceil b/t \rceil + 3 & \leqs \left\lceil c_1\frac{1}{\lfloor \log |\Gamma| \rfloor} + c_2\frac{\log |P|}{k\lfloor \log |\Gamma| \rfloor} \right\rceil + c_3\frac{\log |H|}{t\log |\Gamma|} \\
& \leqs c_4\frac{\log |P|}{\log |\Omega|}+c_5\frac{k\log (|S|^{\ell}|A_{\ell}|)}{t\log |\Omega|} \\
& \leqs c_6\frac{\log |G|}{\log |\Omega|}
\end{align*}
as required (the final inequality follows from \eqref{e:gbd}).
\end{proof}

\vs

By combining Propositions \ref{p:pybs} and \ref{p:pybd}, this completes the proof of Theorem \ref{t:pybppa}.  
\end{proof}

\section{Twisted wreath products}\label{s:tw}

In this final section we prove Pyber's conjecture for twisted wreath products as an easy corollary of Theorem \ref{t:pybppa} on product-type groups. 

\begin{thm}\label{t:tw}
Pyber's conjecture holds if $G$ is a twisted wreath product.
\end{thm}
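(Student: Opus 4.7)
My plan is to reduce Theorem \ref{t:tw} to Theorem \ref{t:pybppa} by embedding the twisted wreath product $G$ as a permutation subgroup of a primitive product-type group $\tilde G$ acting on the same set $\Omega$. Recall that in the twisted wreath product case the socle $N={\rm Soc}(G)=T^k$ acts regularly on $\Omega$, so $|\Omega|=|T|^k$. Fixing a base point, I may identify $\Omega$ with $T^k$ so that $N$ acts on each coordinate by right translation. A point stabilizer $H\leqs G$ is a complement to $N$, and it acts on $\Omega=T^k$ by permuting coordinates via the induced transitive group $P\leqs S_k$ on the $k$ simple factors of $N$, combined with a coordinate-wise action of automorphisms of $T$ coming from the twisting homomorphism.

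I would then take $\tilde G={\rm Hol}(T)\wr P$, where ${\rm Hol}(T)=T\rtimes {\rm Aut}(T)$ acts on $\Gamma=T$ by right translation together with the natural automorphism action. Since $T$ is nonabelian simple, ${\rm Aut}(T)$ is maximal in ${\rm Hol}(T)$, so this action is primitive, and ${\rm Hol}(T)$ is almost simple with socle $T$. Hence $\tilde G$, with the product action on $\Gamma^k=T^k=\Omega$, is a primitive product-type group of type III(b)(i). The crucial observation is that under the identification $\Omega=T^k$ above, the group $G$ lies inside $\tilde G$ as a permutation subgroup: the socle $T^k$ of $G$ coincides with the socle of $\tilde G$ (both acting by right translation in each coordinate), while $H$ embeds into ${\rm Aut}(T)^k\rtimes P$, a subgroup of $\tilde G$.

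Once the embedding is established, I would apply Theorem \ref{t:pybppa} to $\tilde G$ to obtain $b(\tilde G)\leqs c\log|\tilde G|/\log|\Omega|$ for an absolute constant $c$. Since $G\leqs \tilde G$ as permutation groups on $\Omega$, any base for $\tilde G$ is automatically a base for $G$, so $b(G)\leqs b(\tilde G)$. Using the bounds $|\tilde G|=|{\rm Hol}(T)|^k|P|\leqs |T|^{2k}|P|$ (since $|{\rm Aut}(T)|\leqs |T|^2$ for every nonabelian simple $T$) together with $|G|=|T|^k|P|$, one obtains $\log|\tilde G|\leqs 2\log|G|$, yielding the desired conclusion $b(G)\leqs 2c\log|G|/\log|\Omega|$.

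The main obstacle I expect is verifying in detail that the twisted wreath product really does embed in ${\rm Hol}(T)\wr P$ in the stated way. This amounts to unwinding the standard construction of a twisted wreath product and checking that the action of the non-socle part on $\Omega=T^k$ is indeed of the form ``coordinate permutation from $P$'' composed with a coordinate-wise ${\rm Aut}(T)$-action, so that it fits inside $\tilde G$. Once this structural embedding is in hand, the remainder reduces to a routine application of Theorem \ref{t:pybppa} together with the size bounds above.
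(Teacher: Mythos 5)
Your approach is essentially the same as the paper's: embed the twisted wreath product $G$ in a primitive product-type group on the same set $\Omega = T^k$, apply Theorem \ref{t:pybppa} to the ambient group, and compare orders. The paper does this by invoking \cite[Section 3.6]{Praeger} to embed $G$ in $L = T^2 \wr P$ (with $H$ of diagonal type III(a)(ii)), while you take $\tilde G = {\rm Hol}(T) \wr P$; since ${\rm Hol}(T) = T^2.{\rm Out}(T)$ as a permutation group on $T$, the two ambient groups are essentially the same up to outer automorphisms, and either works.

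However, two statements in your write-up are wrong, though neither is fatal. First, ${\rm Hol}(T)$ acting on $T$ is \emph{not} almost simple with socle $T$. Its socle is $R(T) \times L(T) \cong T \times T$ (the right and left regular representations: $L(T)$ is generated by $R(T)$ together with ${\rm Inn}(T) \leqs {\rm Aut}(T)$, and it is normalized by ${\rm Aut}(T)$), so ${\rm Hol}(T)$ is a diagonal-type primitive group of type III(a)(ii), not type II. Consequently $\tilde G$ is of type III(b)(ii), not III(b)(i); Theorem \ref{t:pybppa} covers both types, so the argument still applies, but the classification claim needs correcting. Second, the inequality $|{\rm Hol}(T)| \leqs |T|^2$ does not follow from $|{\rm Aut}(T)| \leqs |T|^2$: one has $|{\rm Hol}(T)| = |T|\cdot |{\rm Aut}(T)| \leqs |T|^3$, which gives $\log|\tilde G| < 3\log|G|$ and hence $b(G) \leqs 3c\log|G|/\log|\Omega|$ rather than $2c\log|G|/\log|\Omega|$. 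Since the constant is still absolute, Pyber's conjecture follows either way. Finally, you flag but do not carry out the verification that the twisted wreath product action actually lands inside ${\rm Hol}(T) \wr P$; the paper discharges this by citing Praeger, and you would need to do the same or unwind the twisted wreath construction explicitly (the key point being that the $P$-action permutes coordinates and twists each by an element of ${\rm Aut}(T)$, placing the complement inside ${\rm Aut}(T) \wr P \leqs {\rm Hol}(T) \wr P$).
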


\begin{proof}
Let $G \leqs {\rm Sym}(\Omega)$ be a primitive twisted wreath product group with socle $T^k$, where $T$ is a  nonabelian simple group and $k \geqs 6$. Note that $T^k$ is a regular normal subgroup of $G$. Let $P=G_{\a}$ be the stabilizer of a point $\a \in \Omega$. Then $G=T^kP$ is a semidirect product, and the top group $P$ is a transitive subgroup of $S_k$. As explained in \cite[Section 3.6]{Praeger}, we can embed $G$ in a primitive group $L \leqs {\rm Sym}(\Omega)$ of product-type, where $L=T^2 \wr P = (T^2)^k.P$ (in terms of the notation in Table \ref{t:onan}, $L = H \wr P$ is of type III(b)(ii) and $H$ is of type III(a)(ii)). By Theorem \ref{t:pybppa}, there exists an absolute constant $c$ such that
$$b(L) \leqs c \frac{\log |L|}{\log |\Omega|},$$
whence
$$b(G) \leqs b(L) \leqs c \frac{\log |L|}{\log |\Omega|} < 2c \frac{\log |G|}{\log |\Omega|}.$$
The result follows. 
\end{proof}

\vs

In view of Theorems \ref{t:as}, \ref{t:d}, \ref{t:pybppa} and \ref{t:tw}, the proof of Theorem \ref{tt:main} is complete.

\end{document}